\documentclass[11pt]{amsart} 

\usepackage[margin=1.4in]{geometry} 
\usepackage{amsmath,enumerate,hyperref,amsthm,amssymb,bbm,mathrsfs} 
\usepackage{pgfplots}
%\usetikzlibrary{math}

%\usepackage{setspace}

\theoremstyle{theorem} 
\newtheorem{theorem}{Theorem}
\newtheorem{proposition}{Proposition}

\newtheorem{lemma}{Lemma}
\theoremstyle{definition} 
 
\newtheorem{defn}{Definition}

%%  Hilbert Space %%

\newcommand{\calh}{\mathcal{H}}
\newcommand{\calf}{\mathcal{F}}
\newcommand{\cala}{\mathcal{A}}
%\newcommand{\calb}{\mathcal{B}}

%% TRANSMUTE MACROS %%

\newcommand{\ep}{\varepsilon}

\newcommand{\bbr}{\mathbb{R}}

%%  Weak Convergence %%

%% Inner Product %%
\newcommand{\lip}{\langle}
\newcommand{\rip}{\rangle}

%% PDEs %%

\DeclareMathOperator{\re}{Re}
\DeclareMathOperator{\im}{Im}
\DeclareMathOperator{\supp}{supp}
\newcommand{\ind}{\mathbbm{1}}

%% WIDE HAT %%
\usepackage{scalerel,stackengine}
\stackMath
\newcommand\reallywidehat[1]{%
\savestack{\tmpbox}{\stretchto{%
  \scaleto{%
    \scalerel*[\widthof{\ensuremath{#1}}]{\kern-.6pt\bigwedge\kern-.6pt}%
    {\rule[-\textheight/2]{1ex}{\textheight}}%WIDTH-LIMITED BIG WEDGE
  }{\textheight}% 
}{0.5ex}}%
\stackon[1pt]{#1}{\tmpbox}%
}
%% %% %% %% %%

\title[Decay of Fractional Wave Equation with Dense Damping]{On the Energy Decay Rate of the Fractional Wave Equation on $\bbr$ with Relatively Dense Damping}
\author{Walton Green}
\address{School of Mathematical and Statistical Sciences, Clemson University, Clemson, South Carolina 29634}
\email{awgreen@clemson.edu}

\newcommand{\ds}{(-\partial_{xx}+1)^{s/2}}
\newcommand{\dss}{(-\partial_{xx}+1)^{s/4}}

\newcommand{\dssi}{(-\partial_{xx}+1)^{-s/4}}

\begin{document}

\begin{abstract}
We establish upper bounds for the decay rate of the energy of the damped fractional wave equation when the averages of the damping coefficient on all intervals of a fixed length are bounded below. If the power of the fractional Laplacian, $s$, is between 0 and 2, the decay is polynomial. For $s \ge 2$, the decay is exponential. Our assumption is also necessary for energy decay. %on the damping is necessary for the energy to decay exponentially.
Second, we prove that exponential decay cannot hold for $s<2$ if the damping vanishes at all.
\end{abstract}

\maketitle

%\section{Introduction}
Consider the following damped fractional wave equation on $\bbr$ for $s>0$ and $\gamma : \bbr \to \bbr_{\ge 0}$:
	\begin{equation}\label{eq:1} w_{tt}(x,t) +\gamma(x)w_t(x,t) + \ds w(x,t)=0, \quad (x,t) \in \bbr \times \bbr_+.\end{equation}
The damping force is represented by $\gamma w_t$. Herein, we study the decay rate of the energy of $w$, defined by
	\[ E(t) = \|(w(t),w_t(t))\|_{H^{s/2} \times L^2} := \left(\int_\bbr |\dss w(x,t)|^2 + |w_t(x,t)|^2 \, dx \right)^{1/2}\]
Standard analysis shows that if $\gamma=0$, then the energy is conserved, i.e. there is no decay. On the other hand, for constant damping $\gamma=c >0$, it can be shown that $E(t)$ decays exponentially. Thus, the interest in this problem is in interpolating between these cases.
The fractional model (\ref{eq:1}) was introduced recently by Malhi and Stanislavova in \cite{malhi19}. Our results are inspired by their paper, but we are even able to recover what is known in the classical case of $s=2$ from a new perspective.

In this classical setting, the problem was initially studied on bounded domains under the so-called Geometric Control Condition (GCC) \cite{b-l-r,rauch74} which requires $\gamma$ to be positive (in some sense) on certain geodesic curves determined by the geometry of the domain. On $\bbr$, or more generally $\bbr^d$, 
the GCC simplifies to the following: there exist $R$ and $c>0$ such that for all line segments $\ell \in \bbr^d$ of length $R$, 
	\begin{equation}\label{eq:gcc} \int_\ell \gamma(x) \, dx >c .\end{equation}
Extending the results of Bardos, Lebeau, Rauch, Taylor, and Phillips \cite{b-l-r,rauch74} to the non-compact case, Burq and Joly \cite{burq16} proved exponential decay of the energy under the GCC on $\bbr^d$ using the semiclassical analysis of \cite{zworski-book}. These methods, utilizing pseudodifferential calculus, require $\gamma$ to be sufficiently smooth.

For the one-dimensional problem, this smoothness condition was relaxed by Malhi and Stanislavova in \cite{malhi18} to $\gamma$ which are continuous and bounded. Moreover, through intricate spectral analysis, it is shown that condition (\ref{eq:gcc}) is equivalent to exponential decay of the energy of solutions to (\ref{eq:1}). %For $d \ge 1$ and for more general Laplacians, (\ref{eq:gcc}) is shown to be sufficient for exponential decay of $E(t)$ in \cite{burq16}, with the requirement that $\gamma$ be uniformly continuous in addition to being bounded. In certain cases where GCC fails, the energy may still decay polynomially \cite{wunsch17,joly18} or logarithmically \cite{burq16}. 
One special case of our results (Theorem \ref{thm:1} with $s=2$) %removes the continuity assumption entirely. Moreover, 
recovers this result and moreover our proof of this equivalence is of a different nature, applying ideas from the study of the harmonic analyst's uncertainty principle, specifically the Paneah-Logvinenko-Sereda Theorem (see Theorem \ref{thm:kov-pls} below).

The initial investigation into the fractional case in \cite{malhi19} %used the simplified assumption that the set
required the restrictive assumption that the set
%For the fractional case, it was noticed in \cite{malhi19} that for $\gamma$ such that 
$\{x \in \bbr : \gamma(x) \ge \ep\}$ contains a periodic set. %for some small $\ep >0, $
In this case, it was proved that the rate of decay is polynomial if $s <2$ and exponential if $s \ge 2$. %depends on the power $s$ of the Laplacian in (\ref{eq:1}). For $s <2$, the rate is polynomial, and for $s \ge 2$ it is exponential.
%For the general case $s>0$, Theorem 1 has been proved in \cite{malhi19} for $\gamma \in L^\infty$ for which there exists $\ep>0$ such that $\{\gamma \ge \ep\}$ contains a periodic set. 
Herein, we relax this periodic condition on $\{\gamma \ge \ep\}$ to require that %$\gamma$ is bounded and satisfies the GCC. We prove the following result:
$\{\gamma \ge \ep\}$ be relatively dense \cite{havin12} which means there exists $R>0$ such that
	\begin{equation}\label{eq:gamma-rel-dense} \inf_{a \in \bbr} m(\{\gamma \ge \ep\} \cap [a-R,a+R]) >0 \end{equation}
where $m$ is the Lebesgue measure. %Moreover, we can answer the question posed in \cite{malhi19} concerning the value of the threshold between exponential and polynomial decay. In the final section we show that exponential decay not only implies our assumption on $\gamma$, but also that $s$ must be greater than $2$, thus establishing $s=2$ as the threshold.

%Our main result is an extension of the result in \cite{malhi19} to sets which are relatively dense. 
% iff version
\begin{theorem}\label{thm:1}
Let $0 \le \gamma \in L^\infty(\bbr)$. There exists $R>0$ such that
	\begin{equation}\label{eq:gamma-dense} \inf_{a \in \bbr} \int_{a-R}^{a+R} \gamma(x) \, dx >0 \end{equation}
if and only if there exists $C,\omega>0$ such that
	\[ E(t) \le \left\{ \begin{array}{lrl} C(1+t)^{\tfrac{-s}{4-2s}}\|w(0),w_t(0)\|_{H^s \times H^{s/2}} & \mbox{if}& 0 < s< 2\\[2mm] Ce^{-\omega t}E(0) & \mbox{if}& s \ge 2 \end{array} \right. \]
for all $t>0$ whenever the right-hand side is finite.
\end{theorem}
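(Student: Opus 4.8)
\emph{Proof sketch.} The plan is to pass to the first-order formulation, reduce both decay statements to a bound on the imaginary axis for the resolvent of its generator (via the Gearhart--Pr\"uss theorem for $s\ge2$ and the Borichev--Tomilov theorem for $0<s<2$), and prove that bound \emph{directly}, with no compactness argument, by feeding the energy dissipation identity into a fixed-scale frequency decomposition together with the Paneah--Logvinenko--Sereda theorem (Theorem~\ref{thm:kov-pls}).

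Write \eqref{eq:1} as $\partial_t U=\mathcal{A}U$ with $U=(w,w_t)$ and $\mathcal{A}=\left(\begin{smallmatrix}0 & I\\ -\ds & -\gamma\end{smallmatrix}\right)$ on $H:=H^{s/2}\times L^2$, so that $E(t)=\|e^{t\mathcal{A}}U(0)\|_H$ and, since $\gamma\in L^\infty$, $D(\mathcal{A})=H^s\times H^{s/2}$ --- exactly the data space of the statement. The semigroup is a contraction, and $i\bbr\subset\rho(\mathcal{A})$ follows because the a priori estimate below, together with its analogue for $\mathcal{A}^\ast$, makes $\mathcal{A}-i\mu$ bijective for every $\mu\in\bbr$ (here one uses that \eqref{eq:gamma-dense} forces $\{\gamma\ge\ep\}$ to be relatively dense for a suitable $\ep>0$). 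The two stability theorems then reduce the claim to
\[
\bigl\|(\mathcal{A}-i\mu)^{-1}\bigr\|_{H\to H}\;\lesssim\;1+|\mu|^{(4-2s)/s},\qquad\mu\in\bbr,
\]
a bound which is $O(1)$ when $s\ge2$ and whose exponent, for $0<s<2$, is the reciprocal of the decay rate $s/(4-2s)$ that Borichev--Tomilov delivers on $D(\mathcal{A})$; only $|\mu|\to\infty$ requires work.

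To prove it, solve $(\mathcal{A}-i\mu)(u,v)=(f,g)$ and put $N:=\|(u,v)\|_H$, $\eta:=\|(f,g)\|_H$. Testing the second equation against $v$ and taking real parts kills the skew-adjoint term and yields the dissipation bound $\int_\bbr\gamma|v|^2\lesssim N\eta$; with $S:=\{\gamma\ge\ep\}$ this gives $\|v\|_{L^2(S)}^2\lesssim\ep^{-1}N\eta$, and via $v=i\mu u+f$ also $\int_\bbr\gamma|u|^2\lesssim\mu^{-2}N\eta$. Eliminating $v$ leaves the elliptic-type identity $(\mu^2-\ds)u=i\mu\gamma u+g+\gamma f+i\mu f$, whose Fourier multiplier $\mu^2-(\xi^2+1)^{s/2}$ vanishes only at $|\xi|=\rho(\mu):=(\mu^{4/s}-1)^{1/2}$, with slope of size $\asymp|\mu|^{2-2/s}$ there. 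Fix a small constant $w>0$ and split $u=u_{\mathrm{loc}}+u_{\mathrm{rem}}$, where $\widehat{u_{\mathrm{loc}}}$ is supported on $\{|\mu^2-(\xi^2+1)^{s/2}|<\tau\}$ with $\tau:=c|\mu|^{2-2/s}w$ --- a set of fixed measure $\asymp w$ consisting of two intervals about $\pm\rho(\mu)$, on which $(\xi^2+1)^{s/4}\asymp|\mu|$. Off that set the multiplier exceeds $\tau$, so inverting it --- keeping the data $f$ in $H^{s/2}$, which is what brings the a priori dangerous term $i\mu f$ down to size $\asymp\tau^{-1}\eta$ in $L^2$ --- gives $\|u_{\mathrm{rem}}\|_{L^2}\lesssim\tau^{-1}\bigl((N\eta)^{1/2}+\eta\bigr)$. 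Since $u_{\mathrm{loc}}$ has Fourier support of fixed measure, Theorem~\ref{thm:kov-pls} gives $\|u_{\mathrm{loc}}\|_{L^2}\lesssim\|u_{\mathrm{loc}}\|_{L^2(S)}\le\|u\|_{L^2(S)}+\|u_{\mathrm{rem}}\|_{L^2}$ with a constant depending only on $w$ and $\ep$; combining this with $\|u\|_{L^2(S)}\lesssim|\mu|^{-1}(\ep^{-1}N\eta)^{1/2}$, the localization $\|u_{\mathrm{loc}}\|_{H^{s/2}}\asymp|\mu|\|u_{\mathrm{loc}}\|_{L^2}$, and $\|v\|_{L^2}\lesssim|\mu|\|u\|_{L^2}+\eta$, one arrives at
\[
N\;\lesssim\;(\ep^{-1}N\eta)^{1/2}+\frac{|\mu|^{2/s-1}}{w}\bigl((N\eta)^{1/2}+\eta\bigr).
\]
Young's inequality absorbs the $(N\eta)^{1/2}$ terms and leaves $N\lesssim\bigl(1+\ep^{-1}+|\mu|^{(4-2s)/s}/w^2\bigr)\eta$, the required resolvent estimate.

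For the converse --- necessity of \eqref{eq:gamma-dense} --- assume it fails, so there are $R_n\to\infty$ and $a_n$ with $\delta_n:=\int_{a_n-R_n}^{a_n+R_n}\gamma\to0$. With $\rho_n:=(\mu_n^{4/s}-1)^{1/2}$ and $u_n(x):=R_n^{-1/2}\psi\bigl((x-a_n)/R_n\bigr)e^{i\rho_n x}$ for a fixed bump $\psi\in C_c^\infty([-1,1])$, a Taylor expansion of the symbol gives $\|(\mu_n^2-\ds)u_n\|_{L^2}\lesssim|\mu_n|^{2-2/s}/R_n$ while $\|\gamma u_n\|_{L^2}^2\lesssim\|\gamma\|_{L^\infty}\delta_n/R_n$, so $(u_n,i\mu_n u_n)\in D(\mathcal{A})$ is a quasimode for $\mathcal{A}-i\mu_n$ with relative error
\[
\frac{\|(\mathcal{A}-i\mu_n)(u_n,i\mu_n u_n)\|_H}{\|(u_n,i\mu_n u_n)\|_H}\;\lesssim\;\frac{|\mu_n|^{1-2/s}}{R_n}+\Bigl(\frac{\delta_n}{R_n}\Bigr)^{1/2}
\]
(using $\|(u_n,i\mu_n u_n)\|_H\asymp|\mu_n|$). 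Choosing $\mu_n\to\infty$ suitably --- say $\mu_n\asymp R_n^{s/(2(2-s))}$ when $s<2$, and a sufficiently slowly growing $\mu_n$ when $s\ge2$ --- this ratio is $o\bigl(|\mu_n|^{-(4-2s)/s}\bigr)$ (respectively $o(1)$), and the standard argument propagating a quasimode to a genuine solution whose energy stays bounded below on a time interval of length comparable to the reciprocal of the relative error then contradicts the claimed decay. I expect the forward resolvent estimate to be the main obstacle: making the frequency splitting rigorous --- the symbol's linearization error across the band, and the fact that $u_{\mathrm{loc}}$ is only supported in, not band-limited to, an interval --- and, above all, inverting the multiplier off the band carefully enough that the sole net loss is the single factor $|\mu|/\tau\asymp|\mu|^{2/s-1}/w$, whose square produces precisely the sharp exponent $(4-2s)/s$; the elliptic regime $s\ge2$ is comparatively soft.
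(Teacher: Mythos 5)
Your forward direction is, at its core, the paper's argument: a frequency decomposition into a two-interval band around the characteristic set $\{(\xi^2+1)^{s/2}=\mu^2\}$ of fixed $\xi$-measure, Kovrijkine's theorem (Theorem \ref{thm:kov-pls}) on the band, elliptic inversion of the multiplier off the band with the loss $|\mu|^{2/s-1}$, the dissipation identity $\re\lip(\cala_\gamma-i\mu)U,U\rip=-\|\sqrt\gamma u_2\|^2$ to absorb the damping terms, and finally Gearhart--Pr\"uss and Borichev--Tomilov; the exponents match \eqref{eq:4} exactly. The only organizational difference is that you eliminate $v$ and work with the scalar second-order resolvent equation, whereas the paper isolates the band/PLS step as a standalone estimate for $\ds$ alone (Proposition \ref{prop:res}, with a width-$1$ band playing the role of your width-$w$ band) and transfers it to the wave generator by diagonalizing into the half-waves $w_{1,2}=\dss u_1\mp iu_2$ (Proposition \ref{prop:res-wave}); the paper's route keeps the $H^{s/2}$-weight bookkeeping for $u_{\mathrm{rem}}$ and the $i\mu f$ term --- the two spots you rightly flag as delicate --- from ever arising, since everything is done at the level of $L^2$ for the first-order symbols $(\xi^2+1)^{s/4}\mp\mu$. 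Where you genuinely diverge is the converse. The paper argues directly: the resolvent bound at the \emph{single} point $\lambda=1$ yields $\|u\|^2\le C\|\gamma u\|^2$ for $\supp\hat u\subset[-D,D]$ (\ref{eq:pls}), and testing with the translated kernels $\sin(D(x-a))/(D(x-a))$ immediately produces an $R$ for which \eqref{eq:gamma-dense} holds. You instead negate \eqref{eq:gamma-dense} and build wave packets $R_n^{-1/2}\psi((x-a_n)/R_n)e^{i\rho_n x}$ as quasimodes; this works (and your exponent balancing $\mu_n\asymp R_n^{s/(2(2-s))}$ is consistent), but it is more machinery than needed --- a fixed $\lambda$ already suffices in both regimes, and the contradiction framing forces you to control how fast $\delta_n\to0$ relative to $R_n$ and to invoke the ``quasimode implies slow decay'' propagation step, none of which the direct test requires. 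Your wave packets are essentially the paper's sinc functions modulated to frequency $\rho_n$; the paper's version of that modulation appears only in its closing remark, where it is used to weaken the hypothesis rather than to run a limiting argument.
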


%one direction
\iffalse
\begin{theorem}\label{thm:1}
Let $0 \le \gamma \in L^\infty(\bbr)$ for which there exists $R>0$ such that
	\begin{equation}\label{eq:gamma-dense} \inf_{a \in \bbr} \int_{a-R}^{a+R} \gamma(x) \, dx >0. \end{equation}
Then, there exists $C,\omega>0$ such that
	\[ E(t) \le \left\{ \begin{array}{lrl} C(1+t)^{\tfrac{-s}{4-2s}}\|w(0),w_t(0)\|_{H^s \times H^{s/2}} & \mbox{if}& 0 < s< 2\\[2mm] Ce^{-\omega t}E(0) & \mbox{if}& s \ge 2 \end{array} \right. \]
for all $t>0$ whenever the right-hand side is finite.
\end{theorem}
\fi
%
Note that for $\gamma$ bounded, the condition (\ref{eq:gamma-dense}) is equivalent to $\{ x \in \bbr : \gamma(x) \ge \ep\}$ being a relatively dense set (\ref{eq:gamma-rel-dense}) for $\ep$ small enough. However, if $\gamma$ is unbounded, then (\ref{eq:gamma-dense}) is the weaker condition.

%We also show the optimality of the condition (\ref{eq:gamma-dense}) for exponential decay of the energy.
The above result does not say anything about the optimality of the rates. However, we can answer the question posed in \cite{malhi19} concerning the value of the threshold between exponential and polynomial decay. In the final section, we show that exponential decay neccesitates that $s$ be greater than $2$ (as long as $\gamma$ is not bounded away from zero), thus establishing $s=2$ as the threshold.

\begin{theorem}\label{thm:2}
Let $0 \le \gamma \in L^\infty(\bbr)$ and $s >0$. Suppose
	\begin{itemize}
%		\item[(i)] $\lim\limits_{\delta \to 0} m(\{\gamma \le \delta\}) >0$.
		\item[(i)] $m(\{\gamma =0\}) >0$.
		\item[(ii)] There exists $C,\omega >0$ such that 
	\[ E(t) \le Ce^{-\omega t} E(0) \]
for all $t>0$ and $E(0)<\infty$. 
	\end{itemize}
Then $s \ge 2$.% and there exists $R>0$ such that (\ref{eq:gamma-dense}) holds.
\end{theorem}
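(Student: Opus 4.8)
The plan is to prove the contrapositive: assuming $0<s<2$ and $m(\{\gamma=0\})>0$, I will show that exponential energy decay cannot hold. Recast \eqref{eq:1} as a first-order system $\dot U=\cala U$ on the energy space $\calh:=H^{s/2}\times L^2$, where $U=(w,w_t)$ and $\cala(u,v)=(v,-\ds u-\gamma v)$; the solution semigroup $e^{t\cala}$ is a contraction on $\calh$ since the energy is non-increasing, so by the Gearhart--Pr\"uss theorem exponential decay is equivalent to $i\bbr\subseteq\rho(\cala)$ together with $\sup_{\lambda\in\bbr}\|(\cala-i\lambda)^{-1}\|_{\call(\calh)}<\infty$. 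I will contradict the latter by constructing, for every small $\eta>0$, an approximate eigenvector $U$ at a point $i\lambda$ with $|\lambda|$ arbitrarily large and $\|(\cala-i\lambda)U\|_\calh/\|U\|_\calh\lesssim\sqrt{\eta}$; letting $\eta\to0$ then forces the imaginary-axis resolvent to be unbounded.

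The first ingredient is to locate a spatial window where the damping is negligible \emph{on average}. Since $\gamma\ge0$ is bounded, the Lebesgue density theorem produces a density point $x_0$ of $\{\gamma=0\}$, and then $\frac1{|I|}\int_I\gamma\le\|\gamma\|_{L^\infty}\,\frac{m(\{\gamma>0\}\cap I)}{|I|}\to0$ as the interval $I\ni x_0$ shrinks; hence for each $\eta>0$ there is $L>0$ with $\int_{x_0-L}^{x_0+L}\gamma\,dx\le 2\eta L$. The second ingredient is a high-frequency wave packet supported in that window: fix $\psi\in C_c^\infty((-1,1))$, and for $N\ge1$ set $u_N(x)=L^{-1/2}\psi\!\big(\tfrac{x-x_0}{L}\big)e^{iNx}$, $\lambda_N=(N^2+1)^{s/4}$, $v_N=i\lambda_N u_N$, $U_N=(u_N,v_N)$. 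The first component of $(\cala-i\lambda_N)U_N$ vanishes identically, and the second equals $(\lambda_N^2-\ds)u_N-i\lambda_N\gamma u_N$. For the dispersive error, $\widehat{u_N}$ concentrates at $\xi=N$ at scale $L^{-1}$, and on that band the symbol $(N^2+1)^{s/2}-(\xi^2+1)^{s/2}$ is $O(N^{s-1}|\xi-N|)$, so $\|(\lambda_N^2-\ds)u_N\|_{L^2}\lesssim_{L,\psi}N^{s-1}\|u_N\|_{L^2}$; since $\|U_N\|_\calh\asymp\lambda_N\|u_N\|_{L^2}\asymp N^{s/2}\|u_N\|_{L^2}$, this contributes $\lesssim_{L,\psi}N^{s/2-1}\to0$, and this is exactly where $s<2$ is used. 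For the damping error, $\gamma^2\le\|\gamma\|_{L^\infty}\gamma$ a.e.\ and $\supp u_N\subseteq[x_0-L,x_0+L]$ give $\|\gamma u_N\|_{L^2}^2\le\|\gamma\|_{L^\infty}\|u_N\|_{L^\infty}^2\int_{x_0-L}^{x_0+L}\gamma\le 2\eta\,\|\gamma\|_{L^\infty}\|\psi\|_{L^\infty}^2$, so $\lambda_N\|\gamma u_N\|_{L^2}/\|U_N\|_\calh\lesssim_\psi\sqrt{\|\gamma\|_{L^\infty}\,\eta}$ uniformly in $N$.

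Combining, $\|(\cala-i\lambda_N)U_N\|_\calh/\|U_N\|_\calh\le\ep_N(\eta)+C_\psi\sqrt{\|\gamma\|_{L^\infty}\,\eta}$ with $\ep_N(\eta)\to0$ as $N\to\infty$ for fixed $\eta$ and $C_\psi=\|\psi\|_{L^\infty}/\|\psi\|_{L^2}$. Since $U_N\in D(\cala)$ (both components are Schwartz), this forces either $i\lambda_N\notin\rho(\cala)$ for some $N$, or $\|(\cala-i\lambda_N)^{-1}\|_{\call(\calh)}\ge\big(\ep_N(\eta)+C_\psi\sqrt{\|\gamma\|_{L^\infty}\,\eta}\big)^{-1}$; letting $N\to\infty$ and then $\eta\to0^+$, the Gearhart--Pr\"uss criterion fails, contradicting hypothesis (ii), and therefore $s\ge2$. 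The main obstacle is the competition of scales in the quasimode: the packet must be coarse enough — spatial scale $L$ dictated by the density point — that its $L^2$-mass genuinely averages the damping against $\int\gamma$ rather than against $\|\gamma\|_{L^\infty}$, which is where boundedness of $\gamma$ is indispensable; yet it must be high enough in frequency to be a genuine quasimode, and reconciling these requires precisely $s<2$, so that the quadratic-in-frequency symbol error $N^{s/2-1}$ decays. For $s\ge2$ this error does not vanish (it grows when $s>2$), in agreement with the exponential decay established in Theorem~\ref{thm:1}.
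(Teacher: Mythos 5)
Your argument is correct, and it reaches the conclusion by a genuinely different route than the paper. Both proofs start from the Gearhart--Pr\"uss characterization, but from there they diverge. The paper first converts the uniform resolvent bound into the scalar inequality $c\|u\|^2\le 3\|(\dss-\lambda)u\|^2+2\|\gamma u\|^2$ via the test vectors $U=(\dsi{}^{1/2}u,iu)$ (more precisely $(\dssi u, iu)$), observes that for $s<2$ the level sets $\{|(|\xi|^2+1)^{s/4}-\lambda|\le K\}$ are unions of intervals whose lengths grow without bound, and hence --- by modulation invariance --- deduces $c\|u\|\le\|\gamma u\|$ for \emph{every} band-limited $u$; it then contradicts this with an uncertainty-principle argument, approximating $f=\ind_{\{\gamma=0\}}\phi$ by its frequency truncations $g_R$ so that $\|\gamma g_R\|\to\|\gamma f\|=0$ while $\|g_R\|\to\|f\|>0$. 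You instead build explicit wave-packet quasimodes $u_N=L^{-1/2}\psi((x-x_0)/L)e^{iNx}$ centered at a Lebesgue density point $x_0$ of $\{\gamma=0\}$, and show directly that the resolvent norm along $i\bbr$ is unbounded; the exponent $s<2$ enters through the relative symbol error $N^{s/2-1}$, which is the derivative of $\xi\mapsto(\xi^2+1)^{s/4}$ at $\xi=N$ --- the same quantity that governs the growth of the paper's intervals, so the two mechanisms are really dual faces of one phenomenon. Your version is more constructive and makes the role of the threshold $s=2$ transparent as a group-velocity computation, at the cost of a two-parameter limit ($N\to\infty$ then $\eta\to0$) and of needing the Lebesgue density theorem where the paper only needs $m(\{\gamma=0\})>0$ directly; the paper's version is shorter given that inequality (\ref{eq:3.2}) is already in hand and fits the uncertainty-principle theme of the whole article. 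Two small points of rigor you should spell out if you write this up: since $\psi\in C_c^\infty$, $\hat\psi$ is not compactly supported, so the symbol estimate $\|(\lambda_N^2-\ds)u_N\|\lesssim_{L,\psi}N^{s-1}\|u_N\|$ and the equivalence $\|u_N\|_{H^{s/2}}\asymp N^{s/2}\|u_N\|_{L^2}$ both require controlling the Fourier tails using the rapid decay of $\hat\psi$ (routine, but it is where smoothness of $\psi$ is actually used); and the constant in the dispersive error depends on $L=L(\eta)$, so the order of limits ($N\to\infty$ first, for fixed $\eta$) is essential and worth stating explicitly, as you have done.
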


\iffalse
\begin{theorem}\label{thm:2}
Let $0 \le \gamma \in L^\infty(\bbr)$ and $s >0$. If there exists $C,\omega >0$ such that 
	\[ E(t) \le Ce^{-\omega t} E(0) \]
for all $t>0$ and $E(0)<\infty$, then, $s \ge 2$ and there exists $R>0$ such that (\ref{eq:gamma-dense}) holds.
\end{theorem}
\fi

\iffalse
\begin{theorem}\label{thm:2}
Let $0 \le \gamma \in L^\infty(\bbr)$ and $s >0$. If there exists $C,\omega >0$ such that 
	\[ E(t) \le Ce^{-\omega t} E(0) \]
for all $t>0$ and $E(0)<\infty$, then there exists $R>0$ such that (\ref{eq:gamma-dense}) holds.
\end{theorem}
\fi

%Together, Theorems \ref{thm:1} and \ref{thm:2} generalize the result in \cite{malhi18} to $\gamma$ which may be discontinuous and $s \ge 2$. Moreover, our proof differs substantially and appears to be simpler.

\iffalse
\begin{defn}
A measurable set $E \subset \bbr$ is said to be relatively dense if there exists $R>0$ such that
	\[ \inf_{a \in \bbr} m(E \cap [a-R,a+R]) >0 .\]
\end{defn}
Indeed, since $\gamma \ge 0$,
	\[ m( \{\gamma \ge \ep\} \cap [a-R,a+R]) \le \dfrac{1}{\ep}\int_{a-R}^{a+R} \gamma(x) \,dx \]
and
	\[ \int_{a-R}^{a+R} \gamma(x) \, dx \le \|\gamma\|_\infty m(\{\gamma \ge \ep\} \cap [a-R,a+R])+2R \ep .\]
If we drop the assumption that $\gamma$ is bounded, then (\ref{eq:gamma-dense}) is weaker than $\{\gamma \ge \ep\}$ being relatively dense.
\fi

The main ingredient in our proof is a resolvent estimate for the fractional Laplacian (Proposition \ref{prop:res} below). In proving this, we will rely on the study of the uncertainty principle for the Fourier transform \cite{havin12} which is defined by
	\[ \calf(f)(\xi) := \hat f (\xi) = \dfrac{1}{\sqrt{2\pi}}\int_\bbr f(x) e^{-ix\xi} \, dx \]
for $\xi \in \bbr$, $f \in L^1(\bbr) \cap L^2(\bbr)$. $\calf$ then uniquely extends to a unitary operator on $L^2(\bbr)$. The manifestation of the uncertainty principle we will use is a generalization due to O. Kovrijkine of the classical Paneah-Logvinenko-Sereda Theorem \cite{paneah61,logvinenko74}.
\begin{theorem}[Thm 2 from \cite{kovrijkine01}]\label{thm:kov-pls}
Let $\{J_k\}_{k=1}^n$ be intervals in $\bbr$ with $|J_k|=b$. Let $E \subset \bbr$ which is relatively dense. Then, there exists $c>0$ such that
	\[ \|f\|_{L^p(E)} \ge c \| f\|_{L^p(\bbr)} \]
for all $f \in L^p, p \in [1,\infty]$ with $\supp \hat f \subset \bigcup_{k=1}^n J_k$. Moreover, $c$
%	\[ c = \left(\dfrac{C}{\gamma}\right)^{-ab\left(\tfrac{C}{\gamma} \right)^n-n+\tfrac{p-1}{p}} \]
depends only on the number and size of the intervals, not on how they are placed.
\end{theorem}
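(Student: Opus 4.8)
The plan is to reduce the global sampling inequality to a uniform \emph{local} estimate and then prove that local estimate by complex-analytic (Remez/Turán-type) methods. I would first record the two invariances that force $c$ to depend only on $n$ and $b$: both $\|f\|_{L^p(E)}$ and $\|f\|_{L^p(\bbr)}$ are unchanged when $f$ is replaced by $e^{i\xi_0 x}f$, and this modulation translates the whole spectral configuration $\bigcup_k J_k$ by $\xi_0$, so the absolute location of the intervals is irrelevant from the outset. Using that $E$ is relatively dense, fix $R$ with $\inf_a m(E\cap[a-R,a+R]) \ge \delta > 0$ and partition $\bbr$ into congruent intervals $\{I_j\}$ of length $2R$, so that $m(E\cap I_j) \ge \delta$ for every $j$. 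The goal becomes an estimate of the form $\int_{I_j}|f|^p \le K\int_{E\cap I_j}|f|^p$ with $K = K(n,b,R,\delta)$ independent of $j$, since summing it over a family of intervals carrying a definite fraction of the total mass yields the theorem.

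To isolate the intervals on which such a local estimate is available, I would classify $I_j$ as \emph{good} or \emph{bad} according to a doubling condition: declare $I_j$ bad if $\int_{3I_j}|f|^p > A\int_{I_j}|f|^p$. Because each point of $\bbr$ lies in exactly three of the dilated intervals $3I_j$, one has $\sum_j \int_{3I_j}|f|^p = 3\|f\|_{L^p(\bbr)}^p$, so choosing $A=6$ forces $\sum_{\text{bad}}\int_{I_j}|f|^p \le \tfrac12\|f\|_{L^p(\bbr)}^p$, whence the good intervals carry at least half the mass. On a good interval the doubling bound $\int_{3I_j}|f|^p \le 6\int_{I_j}|f|^p$ prevents $f$ from being anomalously small on $I_j$ relative to its neighborhood. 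By Paley--Wiener, $\supp\hat f \subset \bigcup_{k=1}^n J_k$ means $f$ extends to an entire function; writing $f(x) = \sum_{k=1}^n e^{ia_k x}g_k(x)$, with $a_k$ the left endpoint of $J_k$ and $\hat g_k$ supported in $[0,b]$, exhibits $f$ as a combination of $n$ functions of exponential type $b$ modulated by unimodular phases. Bernstein's inequality then controls the local oscillation of the $g_k$ uniformly, and together with the doubling bound this yields control of the growth of $f$ into a complex neighborhood $\tilde I_j$ of $I_j$, giving a ratio $M := \sup_{\tilde I_j}|f|/\sup_{I_j}|f| \le e^{C(n+bR)}$.

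With this complexity bound in hand, the final step on a good interval is a Remez/Turán-type inequality: an analytic function whose growth across a complex neighborhood is governed by $M$ and which is small on a subset $E\cap I_j$ of relative measure $\ge \delta/(2R)$ cannot be large on all of $I_j$, quantitatively $\int_{I_j}|f|^p \le (C/\delta)^{\,c\log M}\int_{E\cap I_j}|f|^p$. Since $\log M \lesssim n + bR$ depends only on the number and the size of the intervals (and on the density parameters $R,\delta$ of $E$), the resulting constant $K$ is uniform in $j$ and, by the modulation invariance noted above, independent of the placement of the $J_k$. Summing over the good intervals gives $\tfrac12\|f\|_{L^p(\bbr)}^p \le \sum_{\text{good}}\int_{I_j}|f|^p \le K\|f\|_{L^p(E)}^p$, i.e. the theorem with $c = (2K)^{-1/p}$; the Remez and Bernstein inputs are available for every $p\in[1,\infty]$, so no separate treatment of the endpoints is needed.

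The step I expect to be the main obstacle is precisely the placement-independence of the local complexity $M$. When the intervals $J_k$ are widely separated, $f$ has large exponential type, and a naive Bernstein bound applied to $f$ itself would make $M$, and hence $K$, blow up with the spread of the $a_k$. The resolution is to decouple the placement-dependent phases $e^{ia_k x}$, which are unimodular and therefore invisible to $|f|$ and to the $L^p$ norms, from the placement-independent amplitudes $g_k$, whose complexity is measured only by $n$ (playing the role of a polynomial degree) and by $bR$ (the genuine exponential-type contribution at scale $R$). Carrying this decoupling rigorously through the doubling-to-growth and the Remez steps, so that the oscillatory phases never inflate the constant, is the delicate part of the argument.
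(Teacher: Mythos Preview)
The paper does not prove this statement at all: it is quoted verbatim as ``Thm 2 from \cite{kovrijkine01}'' and used as a black box in the proof of Proposition~\ref{prop:res} (only the case $p=2$, $n=2$ is needed). There is therefore no proof in the paper to compare your attempt against.

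For what it is worth, the strategy you outline --- partition into intervals of length comparable to the density scale, good/bad dichotomy via a doubling condition, then a Remez/Tur\'an-type estimate on good intervals with complexity governed by $n$ and $bR$ --- is precisely the architecture of Kovrijkine's original proof. You have also correctly located the genuine difficulty: the decomposition $f=\sum_k e^{ia_k x}g_k$ does not by itself give usable bounds on the individual $g_k$ in terms of $f$, so a naive Bernstein bound on $f$ would see the full spread of the $a_k$. Kovrijkine resolves this not by decoupling the phases directly but by a derivative/Taylor argument: on a good interval one finds a point where sufficiently many normalized derivatives of $f$ are simultaneously small, and the multi-interval spectrum forces the resulting local Taylor polynomial to have effective degree $\sim n(1+bR)$, which feeds into the Remez step. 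Your sketch would become a proof once that mechanism (or an equivalent one) is supplied; as written, the passage from the doubling bound to $M\le e^{C(n+bR)}$ is asserted rather than argued.
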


In the proof of the proposition, we will only need the case when $p=2$ and there are two intervals $J_1,J_2$.

In order to conclude the polynomial or exponential decay in Theorem \ref{thm:1}, we will use the following two results on semigroups which connect resolvent bounds for the generator to the decay of the semigroup. For exponential decay, there is the following characterization from \cite[Theorem 3]{huang85} (See also \cite{gearhart78,pruss84}).
\begin{theorem}[Gearhart-Pruss Test]\label{thm:pruss}
Let $e^{tA}$ be a $C_0$-semigroup in a Hilbert space $\calh$ and assume there exists $M>0$ such that $\|e^{tA}\| \le M$ for all $t \ge 0$. Then, there exists $C,\omega>0$ such that
	\[ \|e^{tA}\| \le Ce^{-\omega t} \]
if and only if $i\bbr \subset \rho(A)$ and $\sup_{\lambda \in \bbr} \|(A-i\lambda)^{-1}\| < \infty$.
\end{theorem}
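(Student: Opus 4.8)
The plan is to prove the two implications separately. The forward direction is a short computation with the Laplace transform, while the converse is the substantive half: it rests on a Plancherel identity in the Hilbert space $\calh$ together with a resolvent-identity bootstrap that turns the uniform bound on the imaginary axis into square-integrability of the orbits.

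For the forward direction, suppose $\|e^{tA}\| \le Ce^{-\omega t}$. For $\re z > -\omega$ the integral $\int_0^\infty e^{-zt}e^{tA}\,dt$ converges absolutely in operator norm, since $\|e^{-zt}e^{tA}\| \le Ce^{-(\re z+\omega)t}$, and standard semigroup theory identifies it with the resolvent $(z-A)^{-1}$. Hence $\{\re z > -\omega\}\subset\rho(A)$, so in particular $i\bbr\subset\rho(A)$; and since $\|(A-i\lambda)^{-1}\|=\|(i\lambda-A)^{-1}\|$, evaluating at $z=i\lambda$ gives
\[ \|(A-i\lambda)^{-1}\| = \Big\|\int_0^\infty e^{-i\lambda t}e^{tA}\,dt\Big\| \le \int_0^\infty Ce^{-\omega t}\,dt = \frac{C}{\omega}, \]
uniformly in $\lambda\in\bbr$, which is exactly the required resolvent bound.

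For the converse, set $K := \sup_{\lambda\in\bbr}\|(A-i\lambda)^{-1}\| < \infty$ and $M := \sup_{t\ge0}\|e^{tA}\|$. First I would extend the uniform bound off the imaginary axis by a Neumann series: factoring $A-(\alpha+i\lambda) = (A-i\lambda)\big(I-\alpha(A-i\lambda)^{-1}\big)$ shows that for $|\alpha|\le 1/(2K)$ one has $\alpha+i\lambda\in\rho(A)$ with $\|(A-(\alpha+i\lambda))^{-1}\|\le 2K$. Since boundedness of the semigroup already forces $\{\re z>0\}\subset\rho(A)$ with $\|(A-z)^{-1}\|\le M/\re z$, combining the two yields $\delta>0$ and $K'<\infty$ such that $\|(A-z)^{-1}\|\le K'$ on the half-plane $\{\re z\ge-\delta\}$. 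The heart of the matter is then a Plancherel estimate. For fixed $x\in\calh$ and $\alpha>0$, the $\calh$-valued function $t\mapsto e^{-\alpha t}e^{tA}x\,\ind_{t\ge0}$ lies in $L^2(\bbr;\calh)$ with Fourier transform $\lambda\mapsto \tfrac1{\sqrt{2\pi}}(\alpha+i\lambda-A)^{-1}x$, so Plancherel gives
\[ \int_0^\infty e^{-2\alpha t}\|e^{tA}x\|^2\,dt = \frac{1}{2\pi}\int_\bbr \big\|(A-(\alpha+i\lambda))^{-1}x\big\|^2\,d\lambda =: \frac{1}{2\pi}\,\Phi(\alpha)^2. \]
I would then bound $\Phi(\alpha)$ uniformly as $\alpha\downarrow0$. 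Fix $\alpha_0 := \min\{\delta,\,1/K'\}$; note $\Phi(\alpha)<\infty$ for every $\alpha>0$ because the left-hand side is finite. For $0<\alpha<\alpha_0$ the resolvent identity
\[ (A-(\alpha+i\lambda))^{-1} = (A-(\alpha_0+i\lambda))^{-1} + (\alpha-\alpha_0)\,(A-(\alpha+i\lambda))^{-1}(A-(\alpha_0+i\lambda))^{-1}, \]
together with the pointwise bound $\|(A-(\alpha+i\lambda))^{-1}\|\le K'$, gives after taking $L^2(\bbr;\calh)$-norms in $\lambda$ the estimate $\Phi(\alpha)\le(1+\alpha_0K')\,\Phi(\alpha_0)\le 2\,\Phi(\alpha_0)$. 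Since $\Phi(\alpha_0)^2\le \pi M^2\|x\|^2/\alpha_0$ (take $\alpha=\alpha_0$ above and use $\|e^{tA}x\|\le M\|x\|$), this bounds $\Phi(\alpha)$ uniformly in both $\alpha$ and $x$; letting $\alpha\downarrow0$ by monotone convergence yields $\int_0^\infty \|e^{tA}x\|^2\,dt \le C'\|x\|^2$ for all $x\in\calh$, with $C'$ depending only on $M,K,\delta$.

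I expect this uniform-in-$x$ control of $\Phi$ to be the main obstacle: the pointwise bound $K'$ on a single vertical line is not integrable in $\lambda$, so it is precisely the \emph{comparison} between the lines $\re z=\alpha$ and $\re z=\alpha_0$, fed through the strip bound from the Neumann step, that manufactures the $L^2$-gain. Once the orbit estimate is in hand, the conclusion is routine: for $0\le s\le t$ one has $\|e^{tA}x\|\le M\|e^{sA}x\|$, and integrating in $s$ over $[0,t]$ gives $t\,\|e^{tA}x\|^2\le M^2\int_0^t\|e^{sA}x\|^2\,ds\le M^2C'\|x\|^2$. Hence $\|e^{tA}\|^2\le M^2C'/t\to0$, so $\|e^{t_0A}\|<1$ for some $t_0>0$; submultiplicativity across multiples of $t_0$, combined with the bound $M$ on the remaining finite interval, upgrades this to $\|e^{tA}\|\le Ce^{-\omega t}$ for suitable $C,\omega>0$, completing the proof.
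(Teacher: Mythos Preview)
The paper does not prove this theorem; it is quoted from \cite[Theorem 3]{huang85} (with references to \cite{gearhart78,pruss84}) and used as a black box in the later sections. So there is no proof in the paper to compare against.

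That said, your argument is a correct self-contained proof, and it is essentially the classical one from that literature. The forward direction via the Laplace representation of the resolvent is standard. For the converse, your chain --- Neumann-series extension of the uniform resolvent bound to a strip, Hilbert-space Plancherel identity relating $\int_0^\infty e^{-2\alpha t}\|e^{tA}x\|^2\,dt$ to $\int_\bbr\|(A-(\alpha+i\lambda))^{-1}x\|^2\,d\lambda$, the resolvent-identity comparison between the lines $\re z=\alpha$ and $\re z=\alpha_0$ to get a bound independent of $\alpha$, monotone convergence as $\alpha\downarrow 0$, and finally the Datko--Pazy step from square-integrable orbits to exponential decay --- is exactly the argument one finds (with minor variations) in the sources the paper cites. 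Each step checks out; in particular your choice $\alpha_0\le 1/K'$ makes the factor $1+|\alpha-\alpha_0|K'$ at most $2$, and the monotone-convergence passage is legitimate because $e^{-2\alpha t}$ increases to $1$ pointwise as $\alpha\downarrow 0$. One small remark: the extension to $\{\re z\ge -\delta\}$ is more than you actually use, since every line $\re z=\alpha$ appearing in the Plancherel step has $\alpha>0$; the Neumann bound on the strip $|\re z|\le 1/(2K)$ together with the Hille--Yosida bound for larger $\re z$ already suffices.
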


For the polynomial decay, we use the following result from \cite[Theorem 2.4]{borichev10}:
\begin{theorem}[Borichev-Tomilov]\label{thm:borichev}
Let $e^{tA}$ be a $C_0$-semigroup on a Hilbert space $\calh$. Assume there exists $M>0$ such that $\|e^{tA}\| \le M$ for all $t \ge 0$ and $i\bbr \subset \rho(A)$. Then for a fixed $\alpha > 0$, 
	\[ \|e^{tA}A^{-1}\|=O(t^{-1/\alpha}) \mbox{ as } t\to \infty \]
if and only if $\|(A-i\lambda)^{-1}\|=O(\lambda^\alpha)$ as $\lambda \to\infty$.
\end{theorem}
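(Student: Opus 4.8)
The assertion is an equivalence, so two implications must be established. Write $B=A^{-1}$, which is a bounded operator since $i\bbr\subset\rho(A)$ forces $0\in\rho(A)$. Because $e^{tA}$ is a bounded semigroup and $B$ is bounded, the quantity $\|e^{tA}B\|$ is automatically bounded on $[0,\infty)$ while $\|R(is,A)\|$ is locally bounded on $\bbr$; hence both sides of the equivalence concern only the asymptotic regimes $t\to\infty$ and $|s|\to\infty$. The plan is to treat the necessity direction (decay $\Rightarrow$ resolvent growth) by a Laplace-transform computation, and the sufficiency direction (resolvent growth $\Rightarrow$ decay), which is the heart of the matter, by a Fourier--Plancherel argument that uses the Hilbert space structure of $\calh$ in an essential way.

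For necessity, suppose $\|e^{tA}B\|=O(t^{-1/\alpha})$. I would begin from the representation $R(\lambda,A)B=\int_0^\infty e^{-\lambda t}e^{tA}B\,dt$, valid for $\re\lambda>0$, together with the elementary identity $AR(\lambda,A)=\lambda R(\lambda,A)-I$, which rearranges to $R(\lambda,A)=\lambda R(\lambda,A)B-B$. The role of this identity is to trade one power of $\lambda$ for one application of $A$: it reduces the target bound $\|R(is,A)\|=O(|s|^\alpha)$ to the weaker-looking estimate $\|R(is,A)B\|=O(|s|^{\alpha-1})$. The latter I would extract from the Laplace integral above by damping with $\eta=\re\lambda>0$, applying Parseval in the time variable to the regularized orbit $e^{-\eta t}e^{tA}B$, and then combining the decay hypothesis with the oscillation of $e^{-ist}$ to pass to the boundary $\eta\to0$ with control of the requisite order. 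This is the softer of the two directions.

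For sufficiency, assume $\|R(is,A)\|=O(|s|^\alpha)$. The plan is to represent the smoothed semigroup through the resolvent on the imaginary axis: for $x$ in a suitable dense domain and $t>0$, one writes $e^{tA}Bx$ as an oscillatory integral of $e^{ist}(is)^{-1}R(is,A)x$ over $s\in\bbr$, justified by deforming the contour from a vertical line in $\{\re\lambda>0\}$ to the imaginary axis, using $i\bbr\subset\rho(A)$ and the polynomial resolvent bound to control the arcs. I would then split this integral at a frequency scale $\rho=\rho(t)$. On the low-frequency part $|s|\le\rho$ the integrand is controlled directly by the resolvent bound, while on the high-frequency part $|s|>\rho$, where the resolvent grows like $|s|^\alpha$ and pointwise estimates fail, I would invoke Parseval on $\calh$ to convert the resolvent bound into a square-function estimate for the orbit; optimizing the cutoff $\rho(t)$ against $t$ then produces the exponent $t^{-1/\alpha}$. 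The main obstacle is precisely this high-frequency regime: the general principle (Batty--Duyckaerts) that converts resolvent growth into semigroup decay in an arbitrary Banach space yields an additional logarithmic factor, giving only $O((\log t/t)^{1/\alpha})$. Removing this logarithm to reach the sharp rate $O(t^{-1/\alpha})$ is exactly where the Hilbert space Parseval identity, which has no counterpart for general $L^p$ orbits, must enter, and arranging the frequency decomposition and the limit $\eta\to0$ so that the square-function estimate is uniform is the delicate technical core of the argument.
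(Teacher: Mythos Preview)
The paper does not prove this theorem; it is quoted without argument from \cite[Theorem~2.4]{borichev10} and then applied as a black box to convert the resolvent bound~(\ref{eq:4}) into the polynomial decay rate in Theorem~\ref{thm:1}. There is therefore no proof in the paper to compare your proposal against.

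That said, your sketch is a reasonable outline of the actual Borichev--Tomilov argument in the original reference: the necessity direction via the Laplace representation and the identity $R(\lambda,A)=\lambda R(\lambda,A)A^{-1}-A^{-1}$, and the sufficiency direction via a Plancherel/Parseval estimate on the imaginary axis that exploits the Hilbert space structure to remove the logarithmic loss present in the Batty--Duyckaerts Banach-space result. If you intend to include a proof, you should consult \cite{borichev10} directly rather than the present paper, since nothing here addresses the internal mechanics of the theorem.
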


\section{Resolvent Estimates}

\begin{proposition}\label{prop:res}
Let $\Omega \subset \bbr$ be relatively dense, $s>0$. There exists $c>0$ (depending on $\Omega,s$) such that for all $f \in L^2(\bbr)$, $\lambda \ge 0$.
	\begin{equation}\label{eq:res} c\|f\|_{L^2(\bbr)}^2 \le (1+\lambda)^{\tfrac 2s-2}\|(\ds-\lambda)f\|^2_{L^2(\bbr)} + \|f\|_{L^2(\Omega)}^2. \end{equation}
\end{proposition}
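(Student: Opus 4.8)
The plan is to decompose $f$ in frequency into a piece supported near the two ``resonant'' frequencies $\pm\xi_\lambda$ at which the symbol $m(\xi):=(1+\xi^2)^{s/2}$ of $\ds$ equals $\lambda$, and a remainder: the resonant piece will be estimated on $\Omega$ via the Paneah--Logvinenko--Sereda theorem (Theorem~\ref{thm:kov-pls}), while the remainder will be controlled by $\|(\ds-\lambda)f\|$ through a pointwise lower bound on $|m(\xi)-\lambda|$ off the resonance. The weight $(1+\lambda)^{2/s-2}$ is precisely what compensates the size of that lower bound. (We may assume $f\in H^s$, since otherwise the right-hand side of (\ref{eq:res}) is infinite.)

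Set $\xi_\lambda:=\sqrt{\max(\lambda^{2/s}-1,\,0)}$ and fix $b=1$. Let $J_\lambda^+:=[\xi_\lambda-\tfrac12,\xi_\lambda+\tfrac12]$, $J_\lambda^-:=-J_\lambda^+$, and write $f=f_\flat+f_\sharp$ with $\widehat{f_\flat}:=\widehat f\cdot\ind_{\{\xi\,:\,|\,|\xi|-\xi_\lambda|\le 1/2\}}$ and $f_\sharp:=f-f_\flat$. One checks that $\supp\widehat{f_\flat}\subset J_\lambda^+\cup J_\lambda^-$, a union of at most two intervals of length $1$, which merely overlap when $\xi_\lambda<\tfrac12$, and that $f_\flat$ is a band-limited function in $L^2(\bbr)$.

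The crux is the symbol estimate: there is $\kappa>0$, depending only on $s$, with
\[ |m(\xi)-\lambda|\ \ge\ \kappa\,(1+\lambda)^{1-1/s}\qquad\text{whenever }\big|\,|\xi|-\xi_\lambda\,\big|\ge\tfrac12 . \]
The argument splits into $\lambda\ge\Lambda_0$ and $0\le\lambda\le\Lambda_0$, where $\Lambda_0=\Lambda_0(s)$ is a large constant. In the first regime, for (say) $\xi\ge\xi_\lambda+\tfrac12$ monotonicity of $m$ gives $m(\xi)-\lambda\ge m(\xi_\lambda+\tfrac12)-m(\xi_\lambda)=\int_{\xi_\lambda}^{\xi_\lambda+1/2}m'(t)\,dt$, and since $m'(t)=st(1+t^2)^{s/2-1}$ satisfies $m'(t)\gtrsim_s m'(\xi_\lambda)$ uniformly for $t\in[\xi_\lambda,\xi_\lambda+\tfrac12]$ once $\xi_\lambda$ is large, this is $\gtrsim_s m'(\xi_\lambda)=s\sqrt{\lambda^{2/s}-1}\,\lambda^{1-2/s}\sim s\lambda^{1-1/s}$, which is $\gtrsim(1+\lambda)^{1-1/s}$ for $\lambda\ge1$; the set $\xi\le\xi_\lambda-\tfrac12$ (empty unless $\xi_\lambda\ge\tfrac12$) is handled symmetrically using $m'$ on $[\xi_\lambda-\tfrac12,\xi_\lambda]$. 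In the second regime one uses only that $m$ is continuous, strictly increasing, and $\ge1$: then $\inf\{|m(\xi)-\lambda|:|\,|\xi|-\xi_\lambda|\ge\tfrac12\}$ is positive for each such $\lambda$ and, by a compactness argument, is bounded below uniformly over the compact parameter range, while $(1+\lambda)^{1-1/s}$ stays bounded there. This case analysis is the main obstacle: the comparison $m'(t)\gtrsim m'(\xi_\lambda)$ over an interval of fixed length needs care because $m'$ fails to be monotone when $s<1$, and all implied constants must be made uniform in $\lambda$.

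Granting the symbol estimate, the conclusion is short. By Plancherel's theorem, $\|(\ds-\lambda)f\|_{L^2(\bbr)}^2\ge\int_{\{|\,|\xi|-\xi_\lambda|>1/2\}}|m(\xi)-\lambda|^2|\widehat f(\xi)|^2\,d\xi\ge\kappa^2(1+\lambda)^{2-2/s}\|f_\sharp\|_{L^2(\bbr)}^2$, so $\|f_\sharp\|_{L^2(\bbr)}^2\le\kappa^{-2}(1+\lambda)^{2/s-2}\|(\ds-\lambda)f\|_{L^2(\bbr)}^2$. For $f_\flat$, apply Theorem~\ref{thm:kov-pls} with $E=\Omega$, $p=2$, $n=2$ and the two intervals $J_\lambda^\pm$ of length $1$; since the constant there depends only on the number and size of the intervals, hence not on $\lambda$, and on $\Omega$, there is $c_0>0$ with $\|f_\flat\|_{L^2(\Omega)}\ge c_0\|f_\flat\|_{L^2(\bbr)}$. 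Finally $\|f\|_{L^2(\bbr)}^2\le 2\|f_\flat\|_{L^2(\bbr)}^2+2\|f_\sharp\|_{L^2(\bbr)}^2$ and $\|f_\flat\|_{L^2(\Omega)}\le\|f\|_{L^2(\Omega)}+\|f_\sharp\|_{L^2(\Omega)}\le\|f\|_{L^2(\Omega)}+\|f_\sharp\|_{L^2(\bbr)}$; combining these with the bound on $\|f_\sharp\|_{L^2(\bbr)}^2$ and collecting constants yields $\|f\|_{L^2(\bbr)}^2\le C_1\|f\|_{L^2(\Omega)}^2+C_2(1+\lambda)^{2/s-2}\|(\ds-\lambda)f\|_{L^2(\bbr)}^2$ with $C_1,C_2$ depending only on $\Omega$ and $s$, and $c:=1/\max(C_1,C_2)$ gives (\ref{eq:res}).
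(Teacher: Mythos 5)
Your proposal is correct and follows essentially the same route as the paper: decompose $f$ in frequency about the resonant set where the symbol is within $O(1)$ of $\lambda$ (a union of two intervals of uniformly bounded length), apply Theorem~\ref{thm:kov-pls} to the band-limited piece with a constant independent of $\lambda$, and control the remainder by $\|(\ds-\lambda)f\|$ via a pointwise lower bound of size $(1+\lambda)^{1-1/s}$ on the symbol off resonance. The only difference is in how that elementary lower bound is obtained --- the paper parametrizes the resonant set by $\tau=(1+|\xi|^2)^{1/2}$ and proves the algebraic inequality $|\tau^s-\lambda|\ge c_s(1+\lambda)^{1-1/s}$ for $|\tau-\lambda^{1/s}|>1$ (Lemma~\ref{lemma:1}), whereas you parametrize by $|\xi|$ near $\xi_\lambda$ and argue via the mean value theorem for large $\lambda$ plus compactness for bounded $\lambda$; both are valid and yield the same estimate.
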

The operator $\ds$ is understood as a strictly positive Fourier multiplier:
	\[ \ds f(x) := \dfrac{1}{\sqrt{2\pi}}\int_{\bbr} (|\xi|^2+1)^{s/2}\hat f(\xi) e^{ix\xi} \, d\xi. \]
Throughout, we denote by $\|\cdot\|$ the norm $\|\cdot \|_{L^2(\bbr)}$. We begin with the following algebraic lemma.
\begin{lemma}\label{lemma:1}
Let $s>0$. There exists $c_s>0$ such that
	\[ |\tau^s -\lambda| \ge c_s(1+\lambda)^{1-1/s} \]
for all $\tau,\lambda \ge 0$ in the region $| \tau-\lambda^{1/s} | > 1$.
\end{lemma}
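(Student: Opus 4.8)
The plan is to reduce the inequality to an elementary estimate for $t \mapsto t^s$ on $[0,\infty)$ and then treat the cases $s \ge 1$ and $s < 1$ separately. First substitute $\mu = \lambda^{1/s}$, so that $\lambda = \mu^s$ and the region $|\tau - \lambda^{1/s}| > 1$ becomes $|\tau - \mu| > 1$. Writing $a = \max(\tau,\mu)$ and $b = \min(\tau,\mu)$, monotonicity of $t \mapsto t^s$ gives $|\tau^s - \lambda| = a^s - b^s$ with $0 \le b \le a$, $a \ge b+1$, and in particular $a \ge 1$. It therefore suffices to prove
\[ a^s - b^s \ \ge\ c_s\,(1+\mu^s)^{1-1/s} \qquad\text{for all } 0 \le b \le a \text{ with } a \ge b+1. \]
The two cases are genuinely different because $x \mapsto x^{1-1/s}$ is nondecreasing when $s \ge 1$ and nonincreasing when $s < 1$; since $b \le \mu \le a$, this yields $(1+\mu^s)^{1-1/s} \le (1+a^s)^{1-1/s}$ when $s \ge 1$ and $(1+\mu^s)^{1-1/s} \le (1+b^s)^{1-1/s}$ when $s < 1$. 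So it is enough to bound $a^s - b^s$ from below by $c_s(1+a^s)^{1-1/s}$ when $s \ge 1$ and by $c_s(1+b^s)^{1-1/s}$ when $s < 1$.

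For $s \ge 1$ I would use $a^s - b^s \ge a^s - (a-1)^s$ (from $b \le a-1$) together with $(1+a^s)^{1-1/s} \le (2a^s)^{1-1/s} \le 2a^{s-1}$ (valid since $a \ge 1$ and $1-1/s \in [0,1]$), which reduces the claim to $a^s - (a-1)^s \ge c_s' a^{s-1}$ for $a \ge 1$. The quotient $g(a) := (a^s - (a-1)^s)/a^{s-1} = a - (a-1)(1-1/a)^{s-1}$ is continuous and strictly positive on $[1,\infty)$ and, by a first-order expansion of $(1-1/a)^{s-1}$, satisfies $g(a) \to s$ as $a \to \infty$; hence $c_s' := \inf_{a \ge 1} g(a) > 0$ and the claim follows. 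For $s < 1$ the integrand $t^{s-1}$ is decreasing, so $a^s - b^s \ge (b+1)^s - b^s = s\int_b^{b+1} t^{s-1}\,dt \ge s(b+1)^{s-1}$. If $b \ge 1$, then $b+1 \le 2b$ gives $(b+1)^{s-1} \ge 2^{s-1}b^{s-1}$, while $1+b^s \ge b^s$ and $1-1/s < 0$ give $(1+b^s)^{1-1/s} \le b^{s-1}$, so $a^s - b^s \ge s\,2^{s-1}(1+b^s)^{1-1/s}$. If $0 \le b \le 1$, then $b \mapsto (b+1)^s - b^s$ is nonincreasing on $[0,1]$, so $a^s - b^s \ge 2^s - 1 > 0$, whereas $(1+b^s)^{1-1/s} \le 1$ since the exponent is negative. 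Taking $c_s$ to be the minimum of the finitely many positive constants produced above completes the proof.

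I do not anticipate a genuine obstacle here: the statement is elementary calculus, and the only care required is bookkeeping. One must track the fact that the sign of $1 - 1/s$ reverses the monotonicity of $x \mapsto x^{1-1/s}$ — this is exactly what makes the two cases different and what underlies the polynomial-versus-exponential dichotomy in Proposition \ref{prop:res} and hence in Theorem \ref{thm:1} — and one must handle the bounded range (small $\mu$, equivalently $b \in [0,1]$ or $a$ near $1$), where the right-hand side is $O(1)$ and only a crude positive lower bound from continuity and compactness is needed rather than the asymptotically sharp power of $\mu$.
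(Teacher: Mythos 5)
Your proof is correct; every step checks out (in particular $b\le\mu\le a$, $a\ge 1$, the positivity and limit of $(a^s-(a-1)^s)/a^{s-1}$, and the monotonicity of $b\mapsto (b+1)^s-b^s$ for $s<1$), so there is no gap. The route is genuinely different in its organization from the paper's, though equally elementary. The paper runs everything through the single inequality $d_s\max(x,y)^{s-1}|x-y|\le |x^s-y^s|$, splits on whether $\tau\ge\lambda^{1/s}+1$ or $\tau\le\lambda^{1/s}-1$, bounds the resulting monotone expression by its value at the boundary of the region to obtain $|\tau^s-\lambda|\ge c_s(\lambda^{1/s}+1)^{s-1}$, and only at the very end converts this to $(1+\lambda)^{1-1/s}$ using the monotonicity of $p\mapsto(x^p+y^p)^{1/p}$. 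You instead split first on $s\ge 1$ versus $s<1$, use the reversal of monotonicity of $x\mapsto x^{1-1/s}$ to replace $\mu=\lambda^{1/s}$ by $a=\max(\tau,\mu)$ or $b=\min(\tau,\mu)$ as appropriate, and then prove the resulting one-variable inequalities directly: for $s\ge 1$ via $\inf_{a\ge 1}(a^s-(a-1)^s)/a^{s-1}>0$ (continuity, positivity, and the limit $s$ at infinity), and for $s<1$ via the integral bound $(b+1)^s-b^s\ge s(b+1)^{s-1}$ together with a separate crude bound on the compact range $b\in[0,1]$. The paper's version is slightly more compact because the single max-inequality does the work of your two case-specific lower bounds; yours makes more transparent exactly where the sign of $1-1/s$ enters, which is the source of the polynomial-versus-exponential dichotomy downstream.
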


\begin{proof}
First, for any $s>0$, there exists $d_s>0$ such that
	\[d_s \max(x,y)^{s-1} |x-y| \le |x^s -y^s| %\le D_s \max(x,y)^{s-1}|x-y| \]
	\]
for all $x,y \in \bbr_+$. Next, consider two cases. 
\begin{itemize}
	\item[(i)] If $\tau \ge \lambda^{1/s}+1$, then
	\[ |\tau^s -\lambda| \ge d_s\max(\tau,\lambda^{1/s})^{s-1} | \tau-\lambda^{1/s} | =  d_s\tau^{s-1} | \tau-\lambda^{1/s} |.\]
The function $x \mapsto x^{s-1}(x-\mu)$ is positive and increasing for $x >\mu+1$, so we can bound the final term from below by its value at $\tau=\lambda^{1/s}+1$ which yields 
 	\[|\tau^s -\lambda| \ge d_s (\lambda^{1/s}+1)^{s-1} .\]
%if $s \ge 1$, then we have the straightforward estimate
%	\[\bigr||\xi|^s -\lambda\bigr| \ge d_s\max(|\xi|,\lambda^{1/s})^{s-1} \bigr| \cdot 1 = d_s |\xi|^{s-1} \ge d_s(\lambda^{1/s}+1)^{s-1} \]

\item[(ii)]
If $\tau \le \lambda^{1/s}-1$, then %this implies $\lambda \ge 1$. Therefore,
	\[ |\tau^s -\lambda| \ge d_s\max(\tau,\lambda^{1/s})^{s-1} \cdot 1 = d_s (\lambda^{1/s})^{s-1}.  \]
If $s <1$, then $s-1 <0$ so $(\lambda^{1/s})^{s-1} \ge (\lambda^{1/s}+1)^{s-1}$. Since $0 \le \tau \le \lambda^{1/s}-1$, $\lambda \ge 1$. So, for $s \ge 1$,
	\[ (\lambda^{1/s})^{s-1} = \dfrac{(\lambda^{1/s}+\lambda^{1/s})^{s-1}}{2^{s-1}} \ge\dfrac{(\lambda^{1/s}+1)^{s-1}}{2^{s-1}}. \]
\end{itemize}
Therefore, there exists $c_s$ such that
	\[ |\tau^s -\lambda| \ge c_s (\lambda^{1/s}+1)^{s-1} \ge c_s(\lambda+1)^{1-\tfrac 1s}\]
where in the final step, we have used the fact that for $p \le q$, $(x^q+y^q)^{1/q} \le (x^p+y^p)^{1/p}$.
\end{proof}

\iffalse
\begin{proof}
First, for any $s>0$, there exists $d_s>0$ such that
	\[d_s \max(x,y)^{s-1} |x-y| \le |x^s -y^s| %\le D_s \max(x,y)^{s-1}|x-y| \]
	\]
for all $x,y \in \bbr_+$. Next, consider two cases. 
\begin{itemize}
	\item[(i)] If $|\xi| \ge \lambda^{1/s}+1$, then
	\[ \bigr||\xi|^s -\lambda\bigr| \ge d_s\max(|\xi|,\lambda^{1/s})^{s-1} \bigr| |\xi|-\lambda^{1/s} \bigr| =  |\xi|^{s-1} \bigr| |\xi|-\lambda^{1/s} \bigr|.\]
The function $x \mapsto x^{s-1}(x-\mu)$ is positive and increasing for $x >\mu+1$, so we can bound the final term from below by its value at $|\xi|=\lambda^{1/s}+1$ which yields 
 	\[\bigr||\xi|^s -\lambda\bigr| \ge d_s (\lambda^{1/s}+1)^{s-1} .\]
%if $s \ge 1$, then we have the straightforward estimate
%	\[\bigr||\xi|^s -\lambda\bigr| \ge d_s\max(|\xi|,\lambda^{1/s})^{s-1} \bigr| \cdot 1 = d_s |\xi|^{s-1} \ge d_s(\lambda^{1/s}+1)^{s-1} \]

\item[(ii)]
If $|\xi| \le \lambda^{1/s}-1$, then this implies $\lambda \ge 1$. Therefore,
	\[ \bigr||\xi|^s -\lambda\bigr| \ge d_s\max(|\xi|,\lambda^{1/s})^{s-1} \cdot 1 = d_s (\lambda^{1/s})^{s-1}.  \]
If $s <1$, then $s-1 <0$ so $(\lambda^{1/s})^{s-1} \ge (\lambda^{1/s}+1)^{s-1}$. If $s \ge 1$, then
	\[ (\lambda^{1/s})^{s-1} = \dfrac{(\lambda^{1/s}+\lambda^{1/s})^{s-1}}{2^{s-1}} \ge\dfrac{(\lambda^{1/s}+1)^{s-1}}{2^{s-1}}. \]
\end{itemize}
Therefore, there exists $c_s$ such that
	\[ \bigr||\xi|^s -\lambda\bigr| \ge c_s (\lambda^{1/s}+1)^{s-1} \ge c_s(\lambda+1)^{1-\tfrac 1s}\]
where in the final step, we have used the fact that for $p \le q$, $(x^q+y^q)^{1/q} \le (x^p+y^p)^{1/p}$.
\end{proof}
\fi
\begin{proof}[Proof of Proposition \ref{prop:res}.]
%Throughout, we will use $C_s$ to denote a constant depending only on $s$, but which may change line to line.
\iffalse
Let $\lambda \ge 0$, $s>0$ and $g \in L^2(\bbr)$ such that $\supp \hat g \subset [-(\lambda-1)^{1/s}-1,-(\lambda-1)^{1/s}+1] \cup [(\lambda-1)^{1/s}-1,(\lambda-1)^{1/s}+1] =:A_\lambda$. Notice that $A_\lambda = \{ \xi \in \bbr : \big| (|\xi|^2+1)^{1/2}-\lambda^{1/s} \big| \le 1 \}$ and that $A_\lambda$ is the union of two intervals of length 2.
\fi
Let $\lambda \ge 0$, $s>0$ and $g \in L^2(\bbr)$ such that $\supp \hat g \subset A_\lambda := \{ \xi \in \bbr : \big| (|\xi|^2+1)^{1/2}-\lambda^{1/s} \big| \le 1 \}$. Notice that $A_\lambda$ is the union of two intervals of length no more than $4$. Therefore, for $\Omega \subset \bbr$ which is relatively dense, by Theorem \ref{thm:kov-pls}, there exists $C>0$ (independent of $\lambda$ and $g$) such that
	\[ \|g\|%_{L^2(\bbr)} 
	\le C \|g\|_{L^2(\Omega)}. \]
Denote by $P_\lambda$ the projection $P_\lambda f = \calf^{-1}(\ind_{A_{\lambda}} \calf(f))$. Then, for $f \in L^2(\bbr)$,
	\begin{align*} \|f\|^2 &= \| P_\lambda f\|^2 + \|(I-P_\lambda)f\|^2 \\
				&\le C\|P_\lambda f\|_{L^2(\Omega)}^2 + \|(I-P_\lambda)f\|^2 \\
				&= C\|f-(I-P_\lambda)f\|_{L^2(\Omega)}^2 + \|(I-P_\lambda)f\|^2 \\
				&\le 2C\|f\|_{L^2(\Omega)}^2 + 2C\|(I-P_\lambda)f\|^2_{L^2(\Omega)}+\|(I-P_\lambda)f\|^2 \\
				&\le 2C\|f\|_{L^2(\Omega)}^2 + (2C+1)\|(I-P_\lambda)f\|^2 .
	\end{align*}
It remains to estimate the final term. Applying Lemma \ref{lemma:1} with $\tau = (|\xi|^2+1)^{1/2}$, we obtain
	\begin{align*}
		\|(\ds-\lambda)f\|^2%_{L^2(\bbr)} 
			&= \int [(|\xi|^2+1)^{s/2}-\lambda]^2|\hat f(\xi)|^2 \, d\xi \\
			&\ge \int_{A_\lambda^c} [(|\xi|^2+1)^{s/2}-\lambda]^2|\hat f(\xi)|^2 \, d\xi \\
%			&\ge c_s\lambda^{2-\tfrac 2s}\int_{A_\lambda^c} |\hat f(\xi)|^2 \, d\xi \\
			&\ge c_s(\lambda+1)^{2-\tfrac{2}{s}}\int_{A_\lambda^c} |\hat f(\xi)|^2 \, d\xi \\
			&=c_s(\lambda+1)^{2-\tfrac 2s}\|(I-P_\lambda)f\|^2.
	\end{align*}

\end{proof}

\iffalse
As a consequence, we have
	\begin{equation}\label{eq:gamma-res} c\|u\| \le \|(\ds +i\lambda \gamma -\lambda^2)u\|\left\{ \begin{array}{ll} (|\lambda|+1)^{\tfrac 4s-3} & 0 < s < 2 \\ (|\lambda|+1)^{\tfrac 2s -2} & s \ge 2 \end{array} \right\} \end{equation}
for all $\lambda \in \bbr$. Indeed, taking $\Omega = \{\gamma \ge \ep\}$ (which is relatively dense by assumption (\ref{eq:gamma-dense})), and setting $f = (\ds +i\lambda \gamma -\lambda^2)u$,
	\begin{align*}
		c\|u\|^2  &\le 2(\lambda^2+1)^{\tfrac 2s -2}\left[\|f\|^2 + \lambda^2\| \gamma u\|^2\right] + \dfrac{1}{\ep}\|\gamma u\| \\
			&\le 2(\lambda^2+1)^{\tfrac 2s -2}\|f\|^2 + \left[(\lambda^2+1)^{\tfrac 2s-1}+\dfrac{1}{\ep} \right] \|\gamma\|_\infty \|\sqrt \gamma u\|^2
	\end{align*}
Then, since $\|f\| \cdot \|u\| \ge |\lip f,u\rip| \ge \im \lip (\ds +i\lambda \gamma-\lambda^2)u,u\rip = \lambda \|\sqrt{\gamma}u\|^2$, we have
	%\[ \left[(\lambda^2+1)^{\tfrac 2s-1}+\dfrac{1}{\ep} \right] \|\gamma\|_\infty \|\sqrt \gamma u\|^2 \le \dfrac{\|\gamma\|_\infty
	\iffalse
	\[ c\|u\|^2 \le \left[2(\lambda^2+1)^{\tfrac 2s -2}(1+\|\gamma\|_\infty\delta_1\lambda)+\dfrac{\delta_2}{\lambda \sqrt \ep} \right] \|f\|^2 +\left[ \dfrac{2\lambda(\lambda^2+1)^{\tfrac 2s-2}}{\delta_1}+\dfrac{1}{\delta_2\lambda \sqrt \ep}\right] \|u\|^2 \]
for any $\delta_1,\delta_2>0$. Taking $\delta_1=C_1\lambda(\lambda^2+1)^{\tfrac 2s-2}$ and $\delta_2=C_2\lambda^{-1}$ for some large constants $C_1,C_2>0$, we obtain
	\fi
	\[ c\|u\|^2 \le \left[ (\lambda^2+1)^{\tfrac 2s-2}+(\lambda^2+1)^{\tfrac 4s-3} +\lambda^{-2} \right] \|f\|^2\]
If $s<2$, then $\tfrac 2s-2 < \tfrac 4s-3$ and if $s \ge 2$ the reverse inequality holds.
\fi

To apply (\ref{eq:res}) to the wave equation (\ref{eq:1}), we first represent the wave equation as a semigroup: Setting $W(t) = (w(t),w_t(t))$, we see that (\ref{eq:1}) is equivalent to
	\[ \dfrac d{dt}W(t) = \cala_\gamma W(t) \]
%where $\cala_\gamma : H^{s/2} \times L^2 \to H^{s/2} \times L^2$ is densely defined on $H^s \times H^{s/2}$ by $A_\gamma(u_1,u_2) = (u_2,-\ds u_1 - \gamma u_2)$.
where $\cala_\gamma : H^{s} \times H^{s/2} \to H^{s/2} \times L^2$ is densely defined by $A_\gamma(u_1,u_2) = (u_2,-\ds u_1 - \gamma u_2)$. %Setting $\cala(u_1,u_2)=(u_2,-\ds u_1)$, we can deduce a resolvent estimate for $\cala$ and $\cala_\gamma$. 
%$H^{s/2} \times L^2$ is equipped with the inner product
%	\[ \lip U,V \rip_{H^{s/2} \times L^2} := \lip \ds u_1, v_1 \rip_{L^2} + \lip u_2,v_2\rip_{L^2} \]
%for $U=(u_1,u_2)$, $V=(v_1,v_2)$.
The Sobolev space $H^r$ for $r>0$ is defined by the decay of the Fourier transform:
	\[ H^r := \left\{ u \in L^2 : \|u\|_{H^r}^2 = \int_\bbr (|\xi|^2+1)^r |\hat u(\xi)|^2 \,d\xi < \infty \right\}. \]
The definition above is more convenient for our setting so that $\|u\|_{H^{s/2}} = \|\dss u\|$%\lip \ds u,u\rip_{L^2}$
, but the multiplier is equivalent to the usual multiplier $(|\xi|+1)^{2r}$.
It is standard that $\cala_0$ is a closed skew-adjoint operator therefore $e^{t\cala_0}$ is a semigroup of unitary operators. Then, since $\gamma \ge 0$, for %$U=(u_1,u_2) \in H^{s/2} \times L^2$,
$U=(u_1,u_2) \in H^{s} \times H^{s/2}$,
	\[ \re \lip \cala_\gamma^*U,U\rip_{H^{s/2} \times L^2} = \re \lip \cala_\gamma U,U\rip_{H^{s/2} \times L^2} \]
	\[ = \re \lip \cala_0 U,U \rip_{H^{s/2} \times L^2} - \lip \gamma u_2,u_2 \rip_{L^2} = - \lip \gamma u_2,u_2 \rip_{L^2} \le 0. \]
Moreover, since $\gamma \in L^\infty(\bbr)$, the domain of $\cala_\gamma$ is the same as $\cala_0$. So, by classical semigroup theory \cite{pazy83} $e^{tA_\gamma}$ is a $C_0$-semigroup of contractions. We now apply Proposition \ref{prop:res} to $\cala_0$ and $\cala_\gamma$. The first step is an observability inequality for the undamped wave equation (\ref{eq:1}).
\iffalse
Setting
	\[ (\cala_\gamma -i\lambda I) (u_1,u_2) = (f_1,f_2) \]
we see
	\[ [-\ds -i\lambda (\gamma+i\lambda)]u_1 = f_2 + (\gamma+i\lambda)f_1 \]
	\[ u_2 = f_1+i\lambda u_1 \]
so
	\[ [-\ds -i\lambda(\gamma+i\lambda)]u_2 = -\ds f_1 -i\lambda(\gamma +i\lambda)f_1 +i\lambda [\cdots]u_1 \]
	\[= -\ds f_1+(1-i\lambda)(\gamma+i\lambda)f_1+i\lambda f_2\]
\fi
%\iffalse

\begin{proposition}\label{prop:res-wave}
Let $\Omega \subset \bbr$ be relatively dense, $s> 0$. Then, there exists $c>0$ such that
	\[ c\|U\|^2_{H^{s/2} \times L^2} \le (|\lambda|+1)^{\tfrac 4s -2}\|(\cala_0-i\lambda)U\|^2_{H^{s/2} \times L^2} + \|u_2\|_{L^2(\Omega)}^2 \]
for all $U=(u_1,u_2) \in H^{s} \times H^{s/2}$ and $\lambda \in \bbr$.
\end{proposition}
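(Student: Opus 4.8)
The plan is to obtain this observability estimate for the conservative flow by running the two ingredients behind Proposition~\ref{prop:res} --- the algebraic Lemma~\ref{lemma:1} and the Paneah--Logvinenko--Sereda theorem (Theorem~\ref{thm:kov-pls}) --- on the first-order system. Write $(f_1,f_2):=(\cala_0-i\lambda)(u_1,u_2)$, so that $u_2=f_1+i\lambda u_1$; eliminating $u_2$ gives $(\ds-\lambda^2)u_1=-(f_2+i\lambda f_1)$, and eliminating $u_1$ gives $(\ds-\lambda^2)u_2=\ds f_1-i\lambda f_2$. On the Fourier side, with $m(\xi):=(|\xi|^2+1)^{s/2}$, these read
\[ (m(\xi)-\lambda^2)\,\widehat{u_1}(\xi)=-\big(\widehat{f_2}+i\lambda\widehat{f_1}\big)(\xi),\qquad (m(\xi)-\lambda^2)\,\widehat{u_2}(\xi)=\big(m(\xi)\widehat{f_1}-i\lambda\widehat{f_2}\big)(\xi). \]
When $|\lambda|\le 1/\sqrt2$ one has $m(\xi)-\lambda^2\ge \tfrac12$ for every $\xi$, so these identities solve for $\widehat{u_1},\widehat{u_2}$ with uniformly bounded multipliers and a direct computation gives $\|U\|^2_{H^{s/2}\times L^2}\lesssim \|(\cala_0-i\lambda)U\|^2_{H^{s/2}\times L^2}$, which is stronger than claimed. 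So assume $|\lambda|\ge 1/\sqrt2$ from now on, and let $A_{\lambda^2}:=\{\xi : |(|\xi|^2+1)^{1/2}-\lambda^{2/s}|\le 1\}$ be the resonant set from Proposition~\ref{prop:res}, now with parameter $\lambda^2$: it is a union of at most two intervals whose lengths are bounded uniformly in $\lambda$, and on it $m(\xi)\asymp \lambda^2$. Let $P_\lambda$ be the corresponding Fourier projector; since the interval lengths are uniformly bounded, Theorem~\ref{thm:kov-pls} applies to $P_\lambda$-bandlimited functions with a constant independent of $\lambda$.

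\emph{Non-resonant part.} On $A_{\lambda^2}^c$ Lemma~\ref{lemma:1} (with parameter $\lambda^2$) gives $|m(\xi)-\lambda^2|\ge c_s(1+\lambda^2)^{1-1/s}$; splitting $A_{\lambda^2}^c$ according to whether $m(\xi)\le 2\lambda^2$ or $m(\xi)>2\lambda^2$ and combining this with $\lambda^{-2}\lesssim (1+|\lambda|)^{4/s-2}$ (valid for $|\lambda|\ge 1/\sqrt2$) and $(1+\lambda^2)^{2/s-1}\asymp (1+|\lambda|)^{4/s-2}$, one gets the pointwise bounds
\[ \frac{m(\xi)}{(m(\xi)-\lambda^2)^2}\ \lesssim\ (1+|\lambda|)^{4/s-2},\qquad \frac{\lambda^2}{(m(\xi)-\lambda^2)^2}\ \lesssim\ (1+|\lambda|)^{4/s-2}\qquad\text{on }A_{\lambda^2}^c. \]
Feeding these into the two Fourier identities, with the weight $m$ kept attached to $\widehat{f_1}$ (i.e.\ with $f_1$ measured in $H^{s/2}$, using $\tfrac{m\lambda^2}{(m-\lambda^2)^2}|\widehat{f_1}|^2=\tfrac{\lambda^2}{(m-\lambda^2)^2}\,(m|\widehat{f_1}|^2)$), yields
\[ \|(I-P_\lambda)u_2\|_{L^2}^2+\int_{A_{\lambda^2}^c}m(\xi)\,|\widehat{u_1}(\xi)|^2\,d\xi\ \lesssim\ (1+|\lambda|)^{4/s-2}\,\|(\cala_0-i\lambda)U\|^2_{H^{s/2}\times L^2} . \]
(For $u_1$ this is the non-resonant estimate from the proof of Proposition~\ref{prop:res} with parameter $\lambda^2$, refined so as to carry the Sobolev weight.)

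\emph{Resonant part and conclusion.} Since $\supp\widehat{P_\lambda u_2}\subset A_{\lambda^2}$, Theorem~\ref{thm:kov-pls} gives $\|P_\lambda u_2\|_{L^2}\le C_0\|P_\lambda u_2\|_{L^2(\Omega)}\le C_0\big(\|u_2\|_{L^2(\Omega)}+\|(I-P_\lambda)u_2\|_{L^2}\big)$, so with the non-resonant bound,
\[ \|u_2\|_{L^2}^2=\|P_\lambda u_2\|_{L^2}^2+\|(I-P_\lambda)u_2\|_{L^2}^2\ \lesssim\ \|u_2\|_{L^2(\Omega)}^2+(1+|\lambda|)^{4/s-2}\,\|(\cala_0-i\lambda)U\|^2_{H^{s/2}\times L^2} . \]
For $u_1$, since $m\asymp \lambda^2$ on $A_{\lambda^2}$ we have $\int_{A_{\lambda^2}}m|\widehat{u_1}|^2\lesssim \lambda^2\|P_\lambda u_1\|_{L^2}^2$; applying $P_\lambda$ to $u_2=f_1+i\lambda u_1$ gives $\lambda^2\|P_\lambda u_1\|_{L^2}^2\le 2\|P_\lambda u_2\|_{L^2}^2+2\|P_\lambda f_1\|_{L^2}^2$, where the first term is controlled as above and $\|P_\lambda f_1\|_{L^2}^2\lesssim \lambda^{-2}\|f_1\|_{H^{s/2}}^2\lesssim (1+|\lambda|)^{4/s-2}\|f_1\|_{H^{s/2}}^2$, again using $m\asymp \lambda^2$ on $A_{\lambda^2}$. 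Adding this resonant piece to the non-resonant bound for $\int_{A_{\lambda^2}^c}m|\widehat{u_1}|^2$, and then adding the bound for $\|u_2\|_{L^2}^2$, gives $\|U\|^2_{H^{s/2}\times L^2}\lesssim \|u_2\|_{L^2(\Omega)}^2+(1+|\lambda|)^{4/s-2}\|(\cala_0-i\lambda)U\|^2_{H^{s/2}\times L^2}$, which is the assertion after dividing by a constant. (Only $\|u_2\|_{L^2(\Omega)}$ enters; had one invoked Proposition~\ref{prop:res} verbatim on $u_1$, its observation term $\|u_1\|_{L^2(\Omega)}$ would be converted via $i\lambda u_1=u_2-f_1$ and $|\lambda|\ge\tfrac1{\sqrt2}$ into $\lesssim\|u_2\|_{L^2(\Omega)}^2+\|f_1\|_{H^{s/2}}^2$.)

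The step I expect to be the main obstacle is passing from $L^2$- to $H^{s/2}$-control of $u_1$ without degrading the exponent. The naive route --- bound $\|u_1\|_{L^2}$ by Proposition~\ref{prop:res} (parameter $\lambda^2$) and then multiply by $\lambda^2$ to reinstate the $H^{s/2}$ weight on the resonant frequencies --- overshoots by two powers, producing $(1+|\lambda|)^{4/s}$. What keeps the exponent at $4/s-2$ is, first, isolating the resonant frequencies and using $i\lambda P_\lambda u_1=P_\lambda u_2-P_\lambda f_1$, so that the factor $\lambda^2$ is spent converting $|\lambda|\,\|P_\lambda u_1\|_{L^2}$ into $\|P_\lambda u_2\|_{L^2}$ (then controlled by the observation through Theorem~\ref{thm:kov-pls}) rather than simply being lost; and second, not discarding the weight $m$ on $\widehat{f_1}$ before the frequency split, since on the resonant set $\lambda^2|\widehat{f_1}|^2\asymp m|\widehat{f_1}|^2$, which is exactly why $\|f_1\|_{H^{s/2}}^2$ --- and not some larger power of $\lambda$ times $\|f_1\|_{L^2}^2$ --- appears in the end.
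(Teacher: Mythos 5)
Your proof is correct, but it takes a genuinely different route from the paper. The paper diagonalizes the conservative system by setting $w_{1,2}=\dss u_1\mp iu_2$, which converts $\|(\cala_0-i\lambda)U\|^2_{H^{s/2}\times L^2}$ exactly into $\|(\dss-\lambda)w_1\|^2+\|(\dss+\lambda)w_2\|^2$ and $2\|U\|^2_{H^{s/2}\times L^2}$ into $\|w_1\|^2+\|w_2\|^2$; Proposition~\ref{prop:res} with $s$ replaced by $s/2$ then applies verbatim to the resonant half-wave $w_1$ (for $\lambda\ge0$, with the roles swapped for $\lambda<0$), the anti-resonant half-wave is handled by the trivial ellipticity bound $\|(\dss+\lambda)w_2\|\ge(1+\lambda)\|w_2\|$, and the observation term collapses to $\|u_2\|_{L^2(\Omega)}$ via $w_1-w_2=-2iu_2$. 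You instead eliminate to the second-order scalar equations $(\ds-\lambda^2)u_j=\cdots$ and re-run the two underlying ingredients (Lemma~\ref{lemma:1} with parameter $\lambda^2$ and Theorem~\ref{thm:kov-pls}) by hand at energy $\lambda^2$. The extra work this forces on you --- carrying the Sobolev weight $m$ on $\widehat{u_1}$ and $\widehat{f_1}$ through the multiplier estimates, and recovering the resonant part of $u_1$ from that of $u_2$ via $i\lambda P_\lambda u_1=P_\lambda u_2-P_\lambda f_1$ --- is precisely what the paper's change of variables avoids, since Proposition~\ref{prop:res} can then be cited as a black box. I checked the delicate points: the pointwise bounds $m(m-\lambda^2)^{-2}\lesssim(1+|\lambda|)^{4/s-2}$ and $\lambda^2(m-\lambda^2)^{-2}\lesssim(1+|\lambda|)^{4/s-2}$ on $A_{\lambda^2}^c$ do hold for $|\lambda|\ge1/\sqrt2$ (in the regime $m>2\lambda^2$ one has $m(m-\lambda^2)^{-2}\le 2(m-\lambda^2)^{-1}\le2\lambda^{-2}$, and $\lambda^{-2}\lesssim(1+|\lambda|)^{4/s-2}$ there for every $s>0$), the comparability $m\asymp\lambda^2$ on $A_{\lambda^2}$ holds with constants depending only on $s$, and the small-$|\lambda|$ case is genuinely elementary since $m-\lambda^2\ge m/2$. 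One stylistic gain of your version is that it makes transparent why only $\|u_2\|_{L^2(\Omega)}$ enters the right-hand side; one gain of the paper's is brevity and the clean symmetry between $\lambda\ge0$ and $\lambda<0$.
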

\begin{proof}
For $U=(u_1,u_2) \in H^{s}(\bbr) \times H^{s/2}(\bbr)$, set $w_1= \dss u_1-iu_2$ and $w_2=\dss u_1+iu_2$. First, by the parallelogram identity,
	\[ \|w_1\|_{L^2(\bbr)}^2 + \|w_2\|_{L^2(\bbr)}^2 = 2\|\dss u_1\|^2 + 2\|u_2\|^2 = 2\|U\|^2_{H^{s/2} \times L^2}.\]
Second,
	\begin{align*}
		\| (\cala_0-\lambda I)&U\|_{H^{s/2} \times L^2}^2 = \|\dss(-\lambda u_1+ u_2)\|^2 + \|-\ds u_1 -\lambda u_2 \|^2 \\
			&= \|-\lambda\dfrac{w_1+w_2}{2}+i\dss\dfrac{w_1-w_2}{2} \|^2 \\
			&\hspace{5ex}+ \|-\dss\dfrac{w_1+w_2}{2} -i\lambda\dfrac{w_1-w_2}{2} \|^2 \\
			&= \|-i\lambda w_1 - \dss w_1\|^2 + \|-i\lambda w_2+\dss w_2\|^2.
	\end{align*}
\iffalse
Secondly,
	\begin{align*}
		\| (\cala_\gamma-\lambda I)&(u_1,u_2)\|_{H^{s/2} \times L^2}^2 = \|\dss(-\lambda u_1+ u_2)\|^2 + \|-\ds u_1 -\lambda u_2 -\gamma u_2\|^2 \\
			&= \|-\lambda\dfrac{w_1+w_2}{2}+i\dss\dfrac{w_1-w_2}{2} \|^2 + \|-\dss\dfrac{w_1+w_2}{2} -i\lambda\dfrac{w_1-w_2}{2} -\gamma u_2\|^2 \\
			&= \|-i\lambda w_1 - \dss w_1 -\gamma u_2\|^2 + \|-i\lambda w_2+\dss w_2+\gamma u_2\|^2.
	\end{align*}
\fi

So, applying Proposition \ref{prop:res} to $w_1$ with $s$ replaced by $s/2$, we have, for $\lambda \ge 0 $,
	\begin{align}
		2c\|U&\|^2_{H^{s/2}\times L^2} = c(\|w_1\|^2 + \|w_2\|^2) \notag\\
			&\le (|\lambda|+1)^{\tfrac 4s-2} \|(\dss-\lambda)w_1\|^2 + \|w_1\|_{L^2(\Omega)}^2 + c\|w_2\|^2\notag\\
			&\le (|\lambda|+1)^{\tfrac 4s-2} \|(\dss-\lambda)w_1\|^2 + 2\|w_1-w_2\|_{L^2(\Omega)}^2 + (c+2)\|w_2\|^2\notag\\
			&\le (|\lambda|+1)^{\tfrac 4s-2} \|(\dss-\lambda)w_1\|^2 + 8\|u_2\|_{L^2(\Omega)}^2 + \dfrac{c+2}{(|\lambda|+1)^{2}}\|(\dss +\lambda) w_2\|^2\notag\\
			&\le (c+2)(|\lambda|+1)^{\tfrac 4s-2} \| (\cala_0-i\lambda I)U\|_{H^{s/2} \times L^2}^2 + 8\|u_2\|_{L^2(\Omega)}^2. \notag %\label{eq:res-a}
	\end{align}
We get the case $\lambda <0$ by exchanging the roles of $w_1$ and $w_2$. 
\end{proof}

Finally we extend this to $\cala_\gamma-i\lambda I$ and prove Theorem \ref{thm:1}.

\section{Proof of the Decay Rates in Theorem \ref{thm:1}}
First notice that for any $R,\ep >0$, $a \in \bbr$,
	\[ \int_{a-R}^{a+R} \gamma(x) \, dx \le \|\gamma\|_\infty m(\{\gamma \ge \ep\} \cap [a-R,a+R])+2R \ep .\]
So, (\ref{eq:gamma-dense}) implies that $\{\gamma > \ep\}$ is relatively dense for $\ep$ small enough. Therefore, taking $\Omega=\{\gamma \ge \ep\}$ and applying Proposition \ref{prop:res-wave},
	\begin{align}\notag c\|U\|^2_{H^{s/2} \times L^2} &\le (|\lambda|+1)^{\tfrac 4s-2}\| (\cala-i\lambda I)U\|_{H^{s/2} \times L^2}^2 + \|u_2\|_{L^2(\Omega)}^2 \\
			\label{eq:5}&\le 2(|\lambda|+1)^{\tfrac 4s-2} \| (\cala_\gamma-i\lambda I)U\|_{H^{s/2} \times L^2}^2 + \left[2(|\lambda|+1)^{\tfrac 4s-2} + \ep^{-2}\right]\|\gamma u_2\|_{L^2(\Omega)}^2. %\\
			%&\le C\left[ (|\lambda|+1)^{\tfrac 4s-2} + (|\lambda|+1)^{\tfrac 8s-4} + 1\right] \|(\cala_\gamma -i\lambda I)U\|_{H^{s/2} \times L^2}^2 + \dfrac c2\|U\|_{H^{s/2} \times L^2}^2
	\end{align}
We estimate the final term. Since $\cala_0$ is skew-adjoint,
	\[ \re \lip (\cala_\gamma-i\lambda I)U,U \rip = \re \lip (\cala_0-i\lambda I)U,U \rip - \lip \gamma u_2,u_2 \rip = -\|\sqrt{\gamma} u_2\|^2 \]
which implies \[ D\|\gamma u_2\|^2 \le D\|\gamma\|_\infty \|\sqrt \gamma u_2\|^2 \le \dfrac{D^2\|\gamma\|_\infty^2\|(\cala_\gamma-i\lambda)U\|^2}{\delta} +  \delta\|U\|^2 \]
for any $D,\delta>0$. Choosing $D=2(|\lambda|+1)^{\tfrac 4s-2} + \ep^{-2}$ and $\delta = c/2$, from (\ref{eq:5}) we obtain
	\[ c\|U\|^2_{H^{s/2} \times L^2} \le C\left[ (|\lambda|+1)^{\tfrac 4s-2} + (|\lambda|+1)^{\tfrac 8s-4} + 1\right] \|(\cala_\gamma -i\lambda I)U\|_{H^{s/2} \times L^2}^2 + \dfrac c2\|U\|_{H^{s/2} \times L^2}^2.\]
%\fi
Thus, we have proved the following estimate for $(\cala_\gamma -i\lambda I)^{-1}$:
	\begin{equation}\label{eq:4} \|(\cala_\gamma -i\lambda I)^{-1}\|_{H^{s/2} \times L^2 \to H^{s/2} \times L^2} \le \left\{ \begin{array}{ll} C(|\lambda|+1)^{\tfrac 4s-2} & 0 < s <2 \\ C & s \ge 2 .\end{array} \right. \end{equation}
Applying the Theorems \ref{thm:pruss} and \ref{thm:borichev} allows one to conclude the decay rates in Theorem \ref{thm:1} from (\ref{eq:4}).

\section{Neccessity of (\ref{eq:gamma-dense}) and Threshold Value}
%\section{Proof of Theorem \ref{thm:2}}
In this final section we prove the converse in Theorem \ref{thm:1} and subsequently Theorem \ref{thm:2}. 
%\subsection{End of the proof of Theorem \ref{thm:1}}
By the Gearhart-Pruss Test (Theorem \ref{thm:pruss}) and Borichev-Tomilov (Theorem \ref{thm:borichev}), the decay rates of the energy in Theorem \ref{thm:1} imply
	\begin{equation}\label{eq:3.1} c\|U\|_{H^{s/2} \times L^2}^2 \le \|(\cala_\gamma-i\lambda I)U\|_{H^{s/2} \times L^2}^2 \end{equation}
for some $c=c(s,\lambda)>0$ and for all $U \in H^{s/2} \times L^2$ and all $\lambda \in \bbr$. Taking $U=(\dssi u,iu)$ for $u \in L^2(\bbr)$, we have
	\[ 2c\|u\|^2 \le \|(-\lambda +\dss)u\|^2+\|(-\dss -i\gamma+\lambda)u\|^2 \]
	\begin{equation}\label{eq:3.2}\le 3\|(\dss-\lambda)u\|^2 +2\|\gamma u\|^2. \end{equation}
\subsection{Converse in Theorem \ref{thm:1}}
Now, we only consider the special case $\lambda=1$. Let $u \in L^2(\bbr)$ such that $\supp \hat u \subset [-D,D]$ for some $D>0$ to be fixed later. For such $u$,
		\[ \|(\dss-1)u\|^2 = \int_{-D}^D[(|\xi|^2+1)^{s/4}-1]^2|\hat u(\xi)|^2 \, d\xi \le [(D^2+1)^{s/4}-1]^2\|u\|^2. \]
So, taking $D$ small enough, we obtain that there exists $C>0$ such that
	\begin{equation}\label{eq:pls} \|u\|^2 \le C\|\gamma u\|^2 \end{equation}
for all $u \in L^2(\bbr)$ satisfying $ \supp \hat u \subset [-D,D]$. Set $f(x) = \tfrac{\sin(Dx)}{Dx}$. Then, $\supp \hat f \subset [-D,D]$. For each $a \in \bbr$, set $f_a(x) = f(x-a)$. Of course, $\supp \hat f_a \subset [-D,D]$ and $\|f_a\|=\|f\|$. Thus, for any $R>0$,
	\[ \|f\|^2 = \|f_a\|^2 \le C\|\gamma f_a\|^2 = C\int_{[a-R,a+R]}+\int_{[a-R,a+R]^c} |\gamma(x) f_a(x)|^2 \, dx \]
The second integral goes to 0 (uniformly in $a$) as $R \to \infty$ since $\gamma$ is bounded and $f \in L^2$. The first integral becomes
	\[ \int_{a-R}^{a+R} |\gamma(x) f_a(x)|^2 \, dx \le \|\gamma\|_\infty \int_{a-R}^{a+R} \gamma(x) \, dx \]
since $f$ is bounded by $1$. Thus there exists $R$ large such that (\ref{eq:gamma-dense}) holds.
%\|\gamma f_a\|_{L^2([a-R,a+R])}^2 + \|\gamma f_a\|^2_{L^2([a-R,a+R]^c)}^2 \le \|\gamma\|_\infty\int_{a-R}^{a+R} \gamma(x) \, dx + \|\gamma\|_\infty\|f_a\|_{L^2([a-R,a+R]^c)}^2 \]
%So, by (\ref{eq:pls}),
%	\[ \|f\|^2 \le \int_{a-R}^{a+R} |\gamma(x)|^2 \, dx + \|\gamma\|_\infty. \]

We remark that to prove the neccessity of the condition (\ref{eq:gamma-dense}), the decay rates from Theorem \ref{thm:1} can be replaced by an a priori weaker condition, namely that there exists $\lambda \ge 1$ such that $i \lambda \in \rho(\cala_\gamma)$ and $\cala_\gamma-i\lambda$ has closed range. Then, setting $\mu = \sqrt{\lambda^{2/s}-1}$, we obtain (\ref{eq:pls}) for $\supp \hat u \subset [\mu-D,\mu+D]$ ($D$ small enough). The proof is completed analogously by taking $f(x) = e^{i\mu x}\tfrac{\sin(Dx)}{Dx}$.

\subsection{Proof of Theorem \ref{thm:2}}
Now, to prove the threshold value (Theorem \ref{thm:2}), we use the fact that exponential decay yields (\ref{eq:3.1}) with $c$ independent of $\lambda$, from which (\ref{eq:3.2}) follows. Suppose that $s <2$. We will derive a contradiction. In this case, we take $\supp \hat u \subset \{ \xi \in \bbr : \bigr| (|\xi|^2+1)^{s/4} -\lambda \bigr| \le K\} =: A_\lambda(K)$ for $K$ to be chosen later. Then, we have
	\[ \|(\dss-\lambda)u\|^2 = \int_{A_\lambda(K)} [(|\xi|^2+1)^{s/4}-\lambda ]^2 |\hat u(\xi)|^2 \, d\xi \le K^2\|u\|^2.\]
So taking $K$ small enough, we have, as above,
	\begin{equation}\label{eq:uncp} c\|u\| \le \|\gamma u\| \end{equation}
whenever $\supp \hat u \subset A_\lambda(K)$, $\lambda \in \bbr$. $A_\lambda(K)$ is the union of the two intervals
	\[ \pm\left[\sqrt{(\lambda-K)^{4/s}-1},\sqrt{(\lambda+K)^{4/s}-1}\right] \]
and we notice that the length of these intervals is increasing if $s<2$. Indeed,
	\[ \lim_{\lambda \to \infty} \sqrt{(\lambda+K)^{4/s}-1}-\sqrt{(\lambda-K)^{4/s}-1} = \lim_{\lambda \to \infty} \dfrac{\lambda^{4/s-1}}{\lambda^{2/s}} \]
which is $\infty$ if $s<2$. Thus, (\ref{eq:uncp}) holds for $\supp \hat u$ contained in any ball since (\ref{eq:uncp}) does not see modulation of $u$ (translation of $\hat u$).
%Now, we can fit a ball of radius $O(\lambda^{4/s-2}{s}})$ inside $A_\lambda(K)$. However, since $s <2$, the radius of this ball is increasing, while the bound in (\ref{eq:uncp}) remains uniform for all balls. 

We demonstrate that this is a violation of the uncertainty principle. Let $f(x)=\ind_{\{\gamma=0\}}(x) \phi(x)$, where $\phi$ is some positive $L^2$ function so that $f \in L^2$ and $\gamma f=0$. Then, $\hat f \in L^2$ so %$\int_{B(0,R)^c} |\hat f|^2 \to 0$ as $R \to \infty$. Then, 
setting $g_R = \calf^{-1}(\ind_{B(0,R)} \hat f)$, $g_R$ converges to $f$ in the $L^2$ norm. Therefore, since $\supp \hat g_R \subset B(0,R)$, by (\ref{eq:uncp}), 
	\[ c\|g_R\| \le \|\gamma g_R\| \le \|\gamma f\| + \|\gamma(g_R-f)\| \le \|\gamma\|_\infty \|g_R-f\|. \]
The LHS goes to $c\|f\| > 0$ ($f$ is nonzero since $m(\{\gamma = 0\})>0$) while the RHS appoaches zero as $R \to \infty$ which is a contradiction.
\section*{Acknowledgements}
The author is thankful to Milena Stanislavova for introducing him to this problem as well as to Benjamin Jaye and Mishko Mitkovski for comments that greatly improved the presentation of this paper.
\iffalse
The author is thankful to George Avalos, Pelin Guven Geredeli, and L\'aszl\'o Kindrat for organizing the AMS Special Session in Auburn where this work was begun. The author also thanks Milena Stanislavova for helpful comments and a preprint of the paper \cite{malhi19}.
\fi

\bibliographystyle{plain}
\bibliography{/home/waton/mega/School/Research/refs-all/refs-all}

\begin{thebibliography}{10}

\bibitem{b-l-r}
C.~Bardos, G.~Lebeau, and J.~Rauch.
\newblock Sharp sufficient conditions for the observation, control, and
  stabilization of waves from the boundary.
\newblock {\em SIAM J. Control Optim.}, 30(5):1024--1065, 1992.

\bibitem{borichev10}
A.~Borichev and Y.~Tomilov.
\newblock Optimal polynomial decay of functions and operator semigroups.
\newblock {\em Mathematische Annalen}, 347(2):455--478, 2010.

\bibitem{burq16}
N.~Burq and R.~Joly.
\newblock Exponential decay for the damped wave equation in unbounded domains.
\newblock {\em Communications in Contemporary Mathematics}, 18(06):1650012,
  2016.

\bibitem{gearhart78}
L.~Gearhart.
\newblock Spectral theory for contraction semigroups on hilbert space.
\newblock {\em Transactions of the American Mathematical Society},
  236:385--394, 1978.

\bibitem{havin12}
V.~P.~Havin and B.~J{\"o}ricke.
\newblock {\em The uncertainty principle in harmonic analysis}, volume~28.
\newblock Springer Science \& Business Media, 2012.

\bibitem{huang85}
F.~Huang.
\newblock Characteristic conditions for exponential stability of linear
  dynamical systems in hilbert spaces.
\newblock {\em Ann. of Diff. Eqs.}, 1:43--56, 1985.

\bibitem{kovrijkine01}
O.~Kovrijkine.
\newblock Some results related to the {L}ogvinenko-{S}ereda theorem.
\newblock {\em Proceedings of the American Mathematical Society},
  129(10):3037--3047, 2001.

\bibitem{logvinenko74}
V.~N.~Logvinenko and J.~F.~Sereda.
\newblock Equivalent norms in spaces of entire functions of exponential type.
\newblock {\em Teor. Funkci{\i}Funkcional. Anal. i Prilozen. Vyp}, 20:102--111,
  1974.

\bibitem{malhi19}
S.~Malhi and M.~Stanislavova.
\newblock On the energy decay rates for the 1d damped fractional {K}lein-{G}ordon
  equation.
\newblock {\em arXiv preprint arXiv:1809.09531}, 2018.

\bibitem{malhi18}
S.~Malhi and M.~Stanislavova.
\newblock When is the energy of the 1d damped {K}lein-{G}ordon equation decaying?
\newblock {\em Mathematische Annalen}, 372(3-4):1459--1479, 2018.

\bibitem{paneah61}
B.~P.~Paneah.
\newblock Some theorems of {P}aley--{W}iener type.
\newblock In {\em Doklady Akademii Nauk}, volume 138, pages 47--50. Russian
  Academy of Sciences, 1961.

\bibitem{pazy83}
A.~Pazy.
\newblock {\em Semigroups of linear operators and applications to partial
  differential equations}, volume~44.
\newblock Springer-Verlag, 1983.

\bibitem{pruss84}
J.~Pr{\"u}ss.
\newblock On the spectrum of $C_0$-semigroups.
\newblock {\em Transactions of the American Mathematical Society},
  284(2):847--857, 1984.

\bibitem{rauch74}
J.~Rauch, M.~Taylor, and R.~Phillips.
\newblock Exponential decay of solutions to hyperbolic equations in bounded
  domains.
\newblock {\em Indiana university Mathematics journal}, 24(1):79--86, 1974.

\bibitem{zworski-book}
M.~Zworski.
\newblock {\em Semiclassical analysis}, volume 138.
\newblock American Mathematical Soc., 2012.

\end{thebibliography}
%\bibliography{refs-all}
\end{document}